\newcommand{\zerodisplayskips}{%
  \setlength{\abovedisplayskip}{1pt}%
  \setlength{\belowdisplayskip}{1pt}%
  \setlength{\abovedisplayshortskip}{1pt}%
  \setlength{\belowdisplayshortskip}{1pt}}
\appto{\normalsize}{\zerodisplayskips}
\appto{\small}{\zerodisplayskips}
\appto{\footnotesize}{\zerodisplayskips}
\newtheorem{theorem}{Theorem}[section]
\newtheorem{lemma}[theorem]{Lemma}
\newtheorem{prop}[theorem]{Proposition}
\theoremstyle{definition}
\newtheorem{definition}[theorem]{Definition}
\theoremstyle{remark}
\theoremstyle{corollary}
\numberwithin{equation}{section}
\newcommand{\veps}{\varepsilon}
\newcommand{\mZ}{\mathbb{Z}}
\def\ge{\mathfrak{g}}
\def\te{\mathfrak{t}}
\def\al{\alpha}
\def\cd{\cdots}
\def\cV{\mathcal{V}}
\def\C{\mathbb{C}}
\def\Z{\mathbb{Z}}
\def\la{\lambda}
\def\L{\Lambda}
\begin{document}

% \title[short text for running head]{full title}
\title[$D_6^{(1)}$- Geometric Crystal]{$D_6^{(1)}$- Geometric Crystal at the spin node}

%    Only \author and \address are required; other information is
%    optional.  Remove any unused author tags.

%    author one information
% \author[short version for running head]{name for top of paper}
\author{Kailash C. Misra}
\address{Department of Mathematics,
North Carolina State University, Raleigh, NC 27695-8205, USA}
%\curraddr{}
\email{misra@ncsu.edu}
\thanks{KCM is partially supported by the Simons Foundation Grant \#307555.}

%    author two information
\author{Suchada Pongprasert}
\address{Department of Mathematics,
Srinakharinwirot University, Bangkok, Thailand 10310}
%\curraddr{}
\email{suchadapo@g.swu.ac.th }
%\thanks{}

%    author three information

%    The 2010 edition of the Mathematics Subject Classification is
%    the current definitive version.
%\subjclass[2010] {Primary }

\date{}

\begin{abstract}
Let $\ge$ be an affine Lie algebra with index set $I = \{0, 1, 2, \cdots , n\}$. It is conjectured that  for each Dynkin node $k \in I \setminus \{0\}$ the affine Lie algebra $\ge$ has a positive geometric crystal.  In this paper we construct a positive geometric crystal for the affine Lie algebra $D_6^{(1)}$ corresponding to the Dynkin spin node $k= 6$. 
\end{abstract}

\maketitle

%    Text of article.
%\newpage
\section{Introduction}
Let $\ge$ be an affine Lie algebra \cite{Kac}  with Cartan datum $\{A, \Pi, \check{\Pi}, P, \check{P}\}$ and index set $I = \{0,1, \cdots , n\}$ where $A= (a_{ij})_{i,j \in I}$ is the affine GCM, $\Pi = \{\alpha_i \mid i \in I\}$ is the set of simple roots, $\check{\Pi} = \{\check{\alpha_i} \mid i \in I\}$ is the set of simple coroots, $P$ and $\check{P}$ are the weight lattice, and coweight lattice respectively. Let $\te = \C \otimes_{\mZ}\check{P}$, $\bf c$,  $\delta$, and  $\{\L_i \mid i\in I\}$ denote the Cartan subalgebra, the canonical central element, the null root and the set of fundamental weights respectively. Note that $\alpha_j(\check{\alpha}_i) = a_{ij}$ and $\L_j(\check{\alpha_i}) = \delta_{ij}$ and $\te = \text{span}_{\C}\{\check{\alpha}_i, d \mid i \in I\}$ where $d$ is a degree derivation. Then $P = \oplus_{j\in I}\mZ\L_j \oplus \mZ\delta \subset \te^*$, $\check{P} = \oplus_{i\in I}\mZ\check{\alpha}_i \oplus \mZ d \subset \te$ and $P_{cl} = P/\mZ\delta$ is called the classical weight lattice. The set $P^+ = \{\lambda \in P\mid \lambda (\check{\alpha}_i) \in \mZ_{\geq 0} \; \text{for} \; \text{all} \; i \in I\}$ (resp. $P_{cl}^+ = \{\lambda \in P_{cl} \mid \lambda (\check{\alpha}_i) \in \mZ_{\geq 0} \; \text{for} \; \text{all} \; i \in I\}$) is called the set of (affine) dominant (resp. classical dominant) weights and we say that $\lambda \in P^+$ or $P_{cl}^+$ has level $l = \lambda ({\bf c})$. We denote $(P^+)_l$ (resp. $(P_{cl}^+)_l$) to be the set of affine (resp. classical) dominant weights of level $l$. We denote $\te_{cl}^* = \te^*/{{\mathbb C}\delta}$ and 
$(\te_{cl}^*)_0 = \{\la \in \te_{cl}^* \mid \langle {\bf c}, \la\rangle = 0\}$. We also denote $\ge_i$ to be the subalgebra of $\ge$ with index set  $I_i = I \setminus \{i\}$ which is a finite dimensional semisimple  Lie algebra. The Weyl group $W$ of $\ge$ is generated by the simple reflections $\{s_i \mid i \in I\}$. The sets $\Delta$, $\Delta_+$ ,  $\Delta^{re}:=\{w(\al_i)|w\in W,\,\,i\in I\}$ and $\Delta^{re}_+ = \Delta_+ \cap \Delta^{re}$ are called the set of roots, postive roots, real roots and positive real roots respectively. In this paper we will assume $\ge$ to be simply laced which implies that the affine GCM $A = (a_{ij})_{i,j \in I}$ is symmetric.

Let $\ge'$ be the derived Lie algebra of $\ge$ and let  $G$ be the Kac-Moody group associated  with $\ge'$(\cite{KP, PK}).
Let $U_{\alpha}:=\exp\ge_{\alpha}$ $(\alpha\in \Delta^{re})$ be the one-parameter subgroup of $G$. The group $G$ is generated by 
$U_{\alpha}$ $(\alpha\in \Delta^{re})$. Let $U^{\pm}:=\langle U_{\pm\alpha}|\alpha\in\Delta^{re}_+\rangle$ be the subgroup generated by $U_{\pm\alpha}$
($\al\in \Delta^{re}_+$).
For any $i\in I$, there exists a unique homomorphism;
$\phi_i:SL_2(\C)\rightarrow G$ such that
\[
\hspace{-2pt}\phi_i\left(
\left(
\begin{array}{cc}
c&0\\
0&c^{-1}
\end{array}
\right)\right)=c^{\check{\alpha}_i},\,
\phi_i\left(
\left(
\begin{array}{cc}
1&t\\
0&1
\end{array}
\right)\right)=\exp(t e_i),\,
 \phi_i\left(
\left(
\begin{array}{cc}
1&0\\
t&1
\end{array}
\right)\right)=\exp(t f_i).
\]
where $c\in\C^\times$ and $t\in\C$.
Set $\check{\alpha}_i(c):=c^{\check{\alpha}_i}$,
$x_i(t):=\exp{(t e_i)}$, $y_i(t):=\exp{(t f_i)}$, 
$G_i:=\phi_i(SL_2(\C))$,
$H_i:=\phi_i(\{{\rm diag}(c,c^{-1})\mid 
c\in\C \setminus \{0\}\})$. 
%and 
%$N_i:=N_{G_i}(H_i)$
Let $H$  be the subgroup of $G$ generated by $H_i$'s
with the Lie algebra $\te$. 
%and $N$ be the subgroup
%of $G$ generated by the $N_i$'s. 
Then $H$ is called a  maximal torus in $G$, and 
$B^{\pm}=U^{\pm}H$ are the Borel subgroups of $G$.
%We have the isomorphism
%$\phi:W \longrightarrow N/H$ defined by $\phi(s_i)=N_iH/H$.
The element $\bar{s}_i:=x_i(-1)y_i(1)x_i(-1) \in N_G(H)$  is a representative of 
$s_i\in W=N_G(H)/H$. 

The geometric crystal for the simply laced affine Lie algebra $\ge$ is defined as follows.
\begin{definition}\label{geometric}(\cite{BK},\cite{N}) 
The geometric crystal for the simply laced affine Lie algebra $\ge$ is a quadruple $\cV(\ge)=(X, \{e_i\}_{i \in I}, \{\gamma_i\}_{i \in I},$ 
$\{\veps_i\}_{i\in I})$, 
where $X$ is an ind-variety,  $e_i:\C^\times\times
X\longrightarrow X$ $((c,x)\mapsto e^c_i(x))$
are rational $\C^\times$-actions and  
$\gamma_i,\veps_i:X\longrightarrow 
\C$ $(i\in I)$ are rational functions satisfying the following:
%\begin{equation}
%\begin{cases}
\begin{enumerate}
\item $\{1\}\times X\subset {\rm dom}(e_i) \;
{\rm for} \; {\rm any} \; i\in I.$\\
\item $\gamma_j(e^c_i(x))=c^{a_{ij}}\gamma_j(x).$\\
\item $ \{e_i\}_{i\in I} \; {\rm satisfy \; the \; following \; relations}:\\
%\[
 \begin{array}{lll}
&\hspace{-20pt} \quad e^{c_1}_{i}e^{c_2}_{j}
=e^{c_2}_{j}e^{c_1}_{i}&
{\rm if }\,\,a_{ij}=a_{ji}=0,\\
&\hspace{-20pt} \quad e^{c_1}_{i}e^{c_1c_2}_{j}e^{c_2}_{i}
=e^{c_2}_{j}e^{c_1c_2}_{i}e^{c_1}_{j}&
{\rm if }\,\,a_{ij}=a_{ji}=-1,\\
%&\hspace{-20pt} \quad
%e^{c_1}_{i}e^{c^2_1c_2}_{j}e^{c_1c_2}_{i}e^{c_2}_{j}
%=e^{c_2}_{j}e^{c_1c_2}_{i}e^{c^2_1c_2}_{j}e^{c_1}_{i}&
%{\rm if }\,\,a_{ij}=-2,\,
%a_{ji}=-1,\\
%&\hspace{-20pt} \quad
%e^{c_1}_{i}e^{c^3_1c_2}_{j}e^{c^2_1c_2}_{i}
%e^{c^3_1c^2_2}_{j}e^{c_1c_2}_{i}e^{c_2}_{j}
%=e^{c_2}_{j}e^{c_1c_2}_{i}e^{c^3_1c^2_2}_{j}e^{c^2_1c_2}_{i}
%e^{c^3_1c_2}_je^{c_1}_i&
%{\rm if }\,\,a_{ij}=-3,\,
%a_{ji}=-1,
\end{array}$\\
%\] 
\item $\veps_i(e_i^c(x))=c^{-1}\veps_i(x)$ and $\veps_i(e_j^c(x))=\veps_i(x) \qquad {\rm if }\,
a_{i,j}=a_{j,i}=0.$
\end{enumerate}
%\end{cases}
%\end{equation}
\end{definition}
For fixed $i \in I$, let $G^i$ be the reductive algebraic group with Lie algebra $\ge_i$ and $B^i$, $W^i$ be its Borel subgroup, Weyl group respectively.
We consider the flag variety $X^i:=G^i/{B^i}$. For $w \in W^i$, the Schubert cell $X^i_w$ associated with $w$ has a natural $\ge_i$-geometric crystal structure \cite{BK, N}. Let $w=s_{i_1} s_{i_2} \cdots s_{i_l}$ be a reduced expression. For ${\bf i}:=(i_1, i_2, \cdots ,i_l)$, set 
\begin{equation*}
B_{\bf i}^-
:=\{Y_{\bf i}(c_1, c_2, \cdots ,c_l)
:=Y_{i_1}(c_1) Y_{i_2}(c_2)\cdots Y_{i_l}(c_l)
\,\vert\, c_1, c_2, \cdots ,c_l\in\C^\times\}\subset B^-,
%(Y_i(c):=y_i(\frac{1}{c})\al_i^\vee(c)),
\label{bw1}
\end{equation*}
where $Y_j(c):=y_j(\frac{1}{c}){\check\al}_j(c) = y_j(\frac{1}{c})c^{{\check\al}_j}$. Then  we have the following result.
\begin{theorem}\label{schubert}\cite{BK, N}
The set $B_{\bf i}^-$ with the explicit actions of \; $e^c_k$, $\veps_k$, and $\gamma_k$ , for $k \in I_i, c \in \C^\times$ given by:
\begin{eqnarray}
&& e_k^c(Y_{\bf i}(c_1,\cdots,c_l))
=Y_{\bf i}({\mathcal C}_1,\cdots,{\mathcal C}_l)), \notag\\
&&\text{where} \notag\\
&&{\mathcal C}_j:=
c_j\cdot \frac{\displaystyle \sum_{1\leq m\leq j,i_m=k}
 \frac{c}
{c_1^{a_{i_1,k}}\cdots c_{m-1}^{a_{i_{m-1},k}}c_m}
+\sum_{j< m\leq k,i_m=k} \frac{1}
{c_1^{a_{i_1,k}}\cdots c_{m-1}^{a_{i_{m-1},k}}c_m}}
{\displaystyle\sum_{1\leq m<j,i_m=k} 
 \frac{c}{c_1^{a_{i_1,k}}\cdots c_{m-1}^{a_{i_{m-1},k}}c_m}+ 
\mathop\sum_{j\leq m\leq k,i_m=k}  \frac{1}
{c_1^{a_{i_1,k}}\cdots c_{m-1}^{a_{i_{m-1},k}}c_m}},\\
%\label{eici}\\
&& \veps_k(Y_{\bf i}(c_1,\cd,c_l))=
\sum_{1\leq m\leq l,i_m=k} \frac{1}
{c_1^{a_{i_1,k}}\cdots c_{m-1}^{a_{i_{m-1},k}}c_m},\\
%\label{vep-i}\\
&&\gamma_k(Y_{\bf i}(c_1,\cdots,c_l))
=c_1^{a_{i_1,k}}\cdots c_l^{a_{i_l,k}},
%\label{gamma-i}
\end{eqnarray}
is a geometric crystal isomorphic to $X^i_w$.
\end{theorem}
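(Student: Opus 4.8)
The plan is to realize $B_{\bf i}^-$ as the Schubert cell $X_w^i$ through the reduced-word parametrization and then transport to it the canonical $\ge_i$-geometric crystal structure, reading off the three families of rational maps. First I would define the morphism
\[
\theta_{\bf i}\colon (\C^\times)^l \longrightarrow X_w^i, \qquad (c_1,\dots,c_l)\longmapsto \overline{Y_{\bf i}(c_1,\dots,c_l)}\,B^i,
\]
and check that it is a birational isomorphism onto a dense open subset of $X_w^i$. Since ${\bf i}$ is a reduced word for $w$, the product $y_{i_1}(\cdot)\cdots y_{i_l}(\cdot)$ sweeps out an open cell of the unipotent part of the Schubert cell, and the interspersed coweight factors $c_j^{\check\al_{i_j}}$ only change coordinates torically; hence $\theta_{\bf i}$ is dominant and generically injective, giving $B_{\bf i}^-\cong X_w^i$ as varieties.

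The map $\gamma_k$ is immediate: collecting the unipotent and toric parts of $Y_{\bf i}$, its image in the maximal torus is $\prod_j c_j^{\check\al_{i_j}}$, and since $\gamma_k$ is the value of $\al_k$ on this torus element,
\[
\gamma_k\big(Y_{\bf i}(c_1,\dots,c_l)\big)=\prod_{j=1}^{l} c_j^{\al_k(\check\al_{i_j})}=\prod_{j=1}^{l} c_j^{a_{i_j,k}},
\]
which is the asserted formula. For $\veps_k$ and $e_k^c$ I would use three group-theoretic identities inside $G$: the toric commutation $x_k(t)\,c^{\check\al_j}=c^{\check\al_j}\,x_k(c^{-a_{j,k}}t)$; the $SL_2$ relation
\[
x_k(t)\,y_k(s)=y_k\!\Big(\tfrac{s}{1+ts}\Big)\,(1+ts)^{\check\al_k}\,x_k\!\Big(\tfrac{t}{1+ts}\Big);
\]
and the vanishing $[e_k,f_j]=0$ whenever $a_{kj}=-1$ (because $\al_k-\al_j\notin\Delta$ in the simply-laced case), which forces $x_k(t)$ to commute with $y_j(\cdot)$ for every $j\neq k$. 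Pushing a left factor $x_k(a)$ rightward through $Y_{\bf i}(c_1,\dots,c_l)$, every non-$k$ factor leaves the $Y$'s untouched and merely rescales the travelling parameter by $c_m^{-a_{i_m,k}}$, while each $k$-factor gets updated by the $SL_2$ relation. Collecting the $y_k$-contributions together with the toric weights accumulated to their left yields
\[
\veps_k\big(Y_{\bf i}(c_1,\dots,c_l)\big)=\sum_{1\leq m\leq l,\ i_m=k}\frac{1}{c_1^{a_{i_1,k}}\cdots c_{m-1}^{a_{i_{m-1},k}}\,c_m},
\]
and, taking $e_k^c(x)=x_k\big((c-1)\vphi_k(x)\big)\cdot x$ with $\vphi_k=\gamma_k\veps_k$ (the value $a=(c-1)\vphi_k$ is forced already by the rank-one computation $e_k^c(Y_k(c_1))=Y_k(cc_1)$ together with the requirement $\gamma_k(e_k^c x)=c^{a_{kk}}\gamma_k(x)$), the trailing $x_k$ is absorbed into $B^i$ under $\theta_{\bf i}$ and the Möbius-type recursion telescopes into the stated expression for ${\mathcal C}_j$.

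Finally, axioms (1)--(4) of Definition \ref{geometric} need not be re-verified by hand: they hold because $X_w^i$ carries a $\ge_i$-geometric crystal structure \cite{BK, N}, and $\theta_{\bf i}$ intertwines the two, so the isomorphism of geometric crystals follows. I expect the main obstacle to be the bookkeeping in the previous paragraph: showing that the iterated rational substitution produced by successively applying the $SL_2$ relation at each $k$-position collapses to the quoted ratio of sums defining ${\mathcal C}_j$. This is where the simply-laced hypothesis (symmetry of $A$) and the commutation $[e_k,f_j]=0$ are essential, since they guarantee that $x_k(a)$ interacts nontrivially only with the $Y_k$ factors and cleanly (torically) with all the others.
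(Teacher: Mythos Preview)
The paper does not supply its own proof of this theorem; it is quoted from \cite{BK, N} and used as a black box. Your sketch is essentially the argument given in those references: realize $B_{\bf i}^-$ birationally as the Schubert cell via the reduced-word parametrization, read off $\gamma_k$ from the torus part, and compute $\veps_k$ and $e_k^c$ by commuting a left factor $x_k(a)$ through the product $Y_{\bf i}$ using the $SL_2$ identity at each $k$-slot and the toric rescaling elsewhere, with the trailing $x_k$ absorbed by $B^i$.

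One small correction to your justification: the vanishing $[e_k,f_j]=0$ for $j\neq k$ is one of the defining relations of \emph{any} Kac--Moody algebra and needs neither the simply-laced hypothesis nor the condition $a_{kj}=-1$; so $x_k(t)$ commutes with $y_j(s)$ whenever $j\neq k$ in full generality. The simply-laced assumption in this paper enters only later, in restricting which braid-type relations among the $e_k^c$ must be verified in Definition~\ref{geometric}. This does not affect the correctness of your outline.
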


The geometric crystal $\cV(\ge)=(X, \{e_i\}_{i \in I}, 
\{\gamma_i\}_{i \in I}, \{\veps_i\}_{i\in I})$ is said to be positive if it has a 
positive structure \cite{BK, KNO, N}. 
Roughly speaking this means that each of the rational maps 
$e^c_i$, $\veps_i$  and $\gamma_i$ are
ratios of polynomial functions with positive coefficients. 
For example, $B_{\bf i}^-$ is a positive geometric crystal.

It was conjectured in \cite{KNO} that for each affine Lie algebra $\ge$ and 
each Dynkin index $k \in I \setminus \{0\}$, there exists a positive geometric crystal
$\cV(\ge)=(X, \{e_i\}_{i \in I}, \{\gamma_i\}_{i \in I}, \\
\{\veps_i\}_{i\in I})$. So far positive geometric crystals have been explicitly constructed for $\ge = A_n^{(1)}, 
B_n^{(1)}, C_n^{(1)}, D_n^{(1)}, A_{2n-1}^{(2)}, A_{2n}^{(2)},$
$D_{n+1}^{(2)}$ \cite{KNO}, $\ge = D_4^{(3)}$ \cite{IN}, $\ge = G_2^{(1)}$ \cite{N3} for $k=1$, $\ge = A_n^{(1)}$ \cite{MN1, MN2} for $k > 1$ , 
and $\ge = D_5^{(1)}$ for $k=5$ \cite{IMP}.
In this paper we construct explicitly positive geometric crystal  for $\ge = D_6^{(1)}$ corresponding to the  Dynkin spin node $k = 6$ in the level zero fundamental spin module $W(\varpi_6)$. It turns out that the diagram automorphism $\sigma$ in this case has order two, unlike the corresponding diagram automorphism in $D_5^{(1)}$-case \cite{IMP}.
%In Section 2,  we construct a positive geometric crystal $\cV(D_6^{(1)})$ in the level zero fundamental spin module $W(\varpi_6)$. In Section 3, for $l \geq 1$ we coordinatize the perfect crystal $B^{6,l}$ for $D_6^{(1)}$ given in \cite{KMN2} and define an explicit $0$-action. Then we show that the family of perfect crystals $\{B^{6,l}\}_{l\geq 1}$ is a coherent family and determine its limit $B^{6,\infty}$. In the last section we ultra-discretize the positive geometric crystal $\cV(D_6^{(1)})$ and show that it is isomorphic to  $B^{6,\infty}$ as crystals. 

\section{The Quantum Affine algebra $U_q(D_6^{(1)})$}

From now on we assume $\ge$ to be the affine Lie algebra $D_6^{(1)}$ with index set $I = \{0,1,2,3,4,5,6\}$, Cartan matrix $A = (a_{ij})_{i,j \in I}$ where $a_{ii} = 2, a_{j,j + 1} = -1 = a_{j+1,j}, \; j = 1,2,3,4, a_{02} = a_{20} = a_{46} = a_{64} = -1, a_{ij} = 0$ otherwise and Dynkin diagram:
\begin{center}
\begin{tikzpicture}
\draw (-2,1)--(-1,0); \draw (-2,-1)--(-1,0); \draw (-1,0)--(.5,0); \draw (.5,0)--(2,0); \draw (2,0)--(3,1); \draw (2,0)--(3,-1);
\draw [fill] (-2,1) circle [radius=0.1] node[left=.1pt] (a) {0};
\draw [fill] (-2,-1) circle [radius=0.1] node[left=.1pt] (b) {1};
\draw [fill] (-1,0) circle [radius=0.1] node[below=.3pt] (c) {2};
\draw [fill] (.5,0) circle [radius=0.1] node[below=.3pt] (d) {3};
\draw [fill] (2,0) circle [radius=0.1] node[below=.3pt] (e) {4};
\draw [fill] (3,1) circle [radius=0.1] node[right=.1pt] (f) {5};
\draw [fill] (3,-1) circle [radius=0.1] node[right=.1pt] (g) {6};        
\end{tikzpicture}
\end{center}
Let $\{\alpha_0, \alpha_1, \alpha_2, \alpha_3, \alpha_4, \alpha_5, \alpha_6\}, \ \{\check{\alpha_0}, \check{\alpha_1}, \check{\alpha_2}, \check{\alpha_3}, \check{\alpha_4}, \check{\alpha_5}, \check{\alpha_6}\}$ and $\{\Lambda_0, \Lambda_1, \Lambda_2, \Lambda_3, \Lambda_4, \\\Lambda_5, \Lambda_6\}$ denote the set of simple roots, simple coroots and fundamental weights, respectively.
Then ${\bf c} =\check{\alpha_0}+\check{\alpha_1}+2\check{\alpha_2}+2\check{\alpha_3}+2\check{\alpha_4}+\check{\alpha_5}+\check{\alpha_6}$ and $\delta = \al_0 +\al_1+2\al_2+2\al_3+2\al_4+\al_5+\al_6$ are the canonical central element and null root respectively. The sets $P_{cl} = \oplus_{j=0}^6 \Z\L_j$ and $P = P_{cl}\oplus\Z\delta$ are called classical weight lattice and weight lattice respectively.

We consider the Dynkin diagram automorphism %\footnote{We thank T. Nakashima for suggesting it to us.} 
$\sigma$ defined by 
$$\sigma : 0 \mapsto 6, 1 \mapsto 5, 2 \mapsto 4, 3 \mapsto 3, 4 \mapsto 2, 5 \mapsto 1, 6 \mapsto 0.$$

\medskip

\begin{tikzpicture}
\draw (-8,1)--(-7,0); \draw (-8,-1)--(-7,0); \draw (-7,0)--(-6,0); \draw (-6,0)--(-5,0); \draw (-5,0)--(-4,1); \draw (-5,0)--(-4,-1);
\draw [fill] (-8,1) circle [radius=0.1] node[left=.1pt] (a) {0};
\draw [fill] (-8,-1) circle [radius=0.1] node[left=.1pt] (b) {1};
\draw [fill] (-7,0) circle [radius=0.1] node[below=.3pt] (c) {2};
\draw [fill] (-6,0) circle [radius=0.1] node[below=.3pt] (d) {3};
\draw [fill] (-5,0) circle [radius=0.1] node[below=.3pt] (e) {4};
\draw [fill] (-4,1) circle [radius=0.1] node[right=.1pt] (f) {5};
\draw [fill] (-4,-1) circle [radius=0.1] node[right=.1pt] (g) {6};

\path [draw, dotted,thick,rounded corners] 
               (b.north west) 
            -- (c.north west) 
            -- (f.north east) 
            -- (g.south east) 
            -- (b.south west) 
            -- cycle ;   
\draw [fill=white] (-5,1.5) node[below=.1pt] {$D_6$};            
 
\draw [->](-3,0)--(-2,0) ;
\draw [fill=white] (-2.5,.5) node[below=.1pt, black] {$\sigma$};

\draw (3,1)--(2,0); \draw (3,-1)--(2,0); \draw (2,0)--(1,0); \draw (1,0)--(0,0); \draw (0,0)--(-1,1); \draw (0,0)--(-1,-1);
\draw [fill] (3,1) circle [radius=0.1] node[right=.1pt] (h) {1};
\draw [fill] (3,-1) circle [radius=0.1] node[right=.1pt] (i) {0};
\draw [fill] (2,0) circle [radius=0.1] node[below=.3pt] (j) {2};
\draw [fill] (1,0) circle [radius=0.1] node[below=.3pt] (k) {3};
\draw [fill] (0,0) circle [radius=0.1] node[below=.3pt] (l) {4};
\draw [fill] (-1,-1) circle [radius=0.1] node[left=.1pt] (m) {5};
\draw [fill] (-1,1) circle [radius=0.1] node[left=.1pt] (n) {6};

\path [draw, dotted,thick, rounded corners] 
             (i.north east) 
            -- (j.north east) 
            -- (n.north west) 
            -- (m.south west) 
            -- (i.south east) 
            -- cycle ;   
\draw [fill=white] (0,1.5) node[below=.1pt] {$D_6$};    
\end{tikzpicture}
\medskip

Let $\ge_j$ (resp. $\sigma(\ge)_j)$) be the subalgebra of $\ge$ 
(resp. $\sigma(\ge)$) with index set $I_j= I \setminus \{j\}$. Then observe that $\ge_0$ as well as $\ge_1$ and $\sigma(\ge)_1$ are isomorphic to $D_6$.

Let $W(\varpi_6)$ be the level $0$ fundamental $U'_q(\mathfrak{g})$-module associated with the level $0$ weight $\varpi_6 = \L_6 - \L_0$  \cite{K0}. By [\cite{K0}, Theorem 5.17], $W(\varpi_6)$ is a finite-dimensional irreducible integrable $U'_q(\mathfrak{g})$-module and has a global basis with a simple crystal. Thus, we can consider the specialization $q=1$ and obtain the finite-dimensional $D_6^{(1)}$-module $W(\varpi_6)$, which we call the fundamental $D_6^{(1)}$- module  and use the same notation as above. Below we give the explicit description of $W(\varpi_6)$.

\section{Fundamental Representation {\bf{$W(\varpi_6)$}} for $D_6^{(1)}$} 
The fundamental $D_6^{(1)}$-module $W(\varpi_6)$ is a 32-dimensional module with the basis 
$$\{v= (i_1, i_2, i_3, i_4, i_5, i_6) |i_j \in \{+, -\}, \ i_1 i_2 i_3 i_4 i_5 i_6 = +\}.$$
The actions of the generators $e_k$, and $f_k$, $0\le k \le 6$ of $D_6^{(1)}$, on the basis vectors are given as follows.
\begin{displaymath}
\begin{split} 
& e_k (i_1, i_2, i_3, i_4, i_5, i_6) = \left \{
	\begin{array}{lllll}
	(-, -, i_3, i_4, i_5, i_6)    \hspace{.2cm}& \text{ if  }  k = 0, \ (i_1, i_2)=(+,+)\\
	(i_1, \ldots, +, -,\ldots, i_6)  &\text{ if  } k \neq 0, \ k \neq 6, \ \\
	\hspace{1.2cm}k \hspace{.28cm}k+1 &  \hspace{10pt} (i_k, i_{k+1})=(-,+)\\
	(i_1, i_2, i_3, i_4,+, +)    \hspace{.2cm}& \text{ if  } k = 6, \ (i_5, i_6)=(-,-)\\
	0 \hspace{1cm} &\text{ otherwise.}
	\end{array}
	\right. \\
& f_k (i_1, i_2, i_3, i_4, i_5,i_6) = \left \{
	\begin{array}{lllll}
	(+, +, i_3, i_4, i_5,i_6)    \hspace{.2cm}& \text{ if  }  k = 0, \ (i_1, i_2)=(-,-)\\
	(i_1, \ldots, -, +,\ldots, i_6)  &\text{ if  } k \neq 0, \ k \neq 6, \ \\
	\hspace{1.2cm}k \hspace{.28cm}k+1 & \hspace{10pt} (i_k, i_{k+1})=(+,-) \\
	(i_1, i_2, i_3, i_4,-, -)    \hspace{.2cm} & \text{ if  } k = 6, \ (i_5, i_6)=(+,+)\\
	0 \hspace{1cm}  &\text{ otherwise.}
	\end{array}
	\right.
\end{split}
\end{displaymath}

Furthermore, we observe that
\begin{displaymath}
\begin{split} 
& \langle {\check\alpha_k}, {\rm wt}(v)\rangle = \left \{
\begin{array}{lllll}
1   \hspace{.1cm}& & \text{ if  } \ k = 0, \ (i_1, i_2)=(-,-)\\
& & \text{ or  }  k \neq 0, \ k \neq 6, \ (i_k, i_{k+1})=(+,-)\\
& & \text{ or  }  k = 6, (i_5, i_6)=(+,+)\\
-1 & & \text{ if  } \ k = 0, \ (i_1, i_2)=(+,+)\\
& & \text{ or  }  k \neq 0, \ k \neq 6, (i_k, i_{k+1})=(-,+)\\
& & \text{ or  }  k = 6, \ (i_5, i_6)=(-,-)\\
0 & & \text{ otherwise.}
\end{array}
\right. \\
\end{split}
\end{displaymath}

%\begin{displaymath}
%\begin{split} 
%& {\alpha^{\vee}_k}(c) (i_1, i_2, i_3, i_4, i_5) = \left \{
%	\begin{array}{lllll}
%	c\ (i_1, i_2, i_3, i_4, i_5)   \hspace{.3cm}& & \text{ if  \ wt}_k (i_1, i_2, i_3, i_4, i_5) =1 \\
%	(i_1, i_2, i_3, i_4, i_5) & & \text{ if  \ wt}_k (i_1, i_2, i_3, i_4, i_5) =0 \\
%	c^{-1} (i_1, i_2, i_3, i_4, i_5)   & & \text{ if  \ wt}_k (i_1, i_2, i_3, i_4, i_5) =-1. \\
%	\end{array}
%	\right. \\
%\end{split}
%\end{displaymath}
Note that in $W(\varpi_6)$, we have $(+,+,+,+,+,+)$  (resp. $(-,+,+,+,+,-)$) is a $\mathfrak{g}_0$ (resp. $\mathfrak{g}_1$) highest weight vector with weight $\varpi_6 = \Lambda_6-\Lambda_0$ (resp. $\check{\varpi_6}:=\Lambda_5-\Lambda_1$). We define $\sigma(\L_j) = \L_{\sigma(j)}$ for $j \in I$. Then we define the action of $\sigma$ on $W(\varpi_6)$ by $\sigma(v) = v'$ if $\sigma({\rm wt}(v)) = {\rm wt}(v')$. 

\section{Positive Geometric Crystals in \bf{$W(\varpi_6)$}}
For $\xi \in (\mathfrak{t}^*_{\text{cl}})_0$, let $t(\xi)$ be the translation as in [\cite{K0}, Sect 4]. Define simple reflections $s_k (\lambda) := \lambda - \lambda({\check\alpha_k})\alpha_k , k \in I$ and let $W = \langle s_k \mid k \in I \rangle$ be the Weyl group for $D_6^{(1)}$. Then we have
\begin{align*}
&t(\varpi_6) = \sigma s_6s_4s_3s_2s_5s_4s_3s_6s_4s_5s_1s_2s_3s_4s_6=\sigma w_1,\\
&t(\check{\varpi_6}) = \sigma s_5s_4s_3s_2s_6s_4s_3s_5s_4s_6s_0s_2s_3s_4s_5=\sigma w_2,
\end{align*}
where $w_1 = s_6s_4s_3s_2s_5s_4s_3s_6s_4s_5s_1s_2s_3s_4s_6 \in W^0$ and $w_2 = s_5s_4s_3s_2s_6s_4s_3s_5s_4s_6\\s_0s_2s_3s_4s_5 \in W^1$.
Associated with these Weyl group elements $w_1, w_2 \in W$, we define algebraic varieties $\mathcal{V}_1, \mathcal{V}_2 \subset W(\varpi_6)$ as follows.
\begin{align*}
\mathcal{V}_1	&=\big\{V_1(x):=Y_6(x_6^{(3)})Y_4(x_4^{(4)})Y_3(x_3^{(3)})Y_2(x_2^{(2)})Y_5(x_5^{(2)})Y_4(x_4^{(3)})Y_3(x_3^{(2)})Y_6(x_6^{(2)})\\
&\hspace{.7cm}Y_4(x_4^{(2)})Y_5(x_5^{(1)})Y_1(x_1^{(1)})Y_2(x_2^{(1)})Y_3(x_3^{(1)})Y_4(x_4^{(1)})Y_6(x_6^{(1)})(+,+,+,+,+,+) \mid\\
&\hspace{.7cm} x_m^{(l)} \in \mathbb{C}^{\times} \big\}, \\
\mathcal{V}_2	&=\big\{V_2(y):=Y_5(y_5^{(3)})Y_4(y_4^{(4)})Y_3(y_3^{(3)})Y_2(y_2^{(2)})Y_6(y_6^{(2)})Y_4(y_4^{(3)})Y_3(y_3^{(2)})Y_5(y_5^{(2)})\\
&\hspace{.7cm}Y_4(y_4^{(2)})Y_6(y_6^{(1)})Y_0(y_0^{(1)})Y_2(y_2^{(1)})Y_3(y_3^{(1)})Y_4(y_4^{(1)})Y_5(y_5^{(1)})(-,+,+,+,+,-) \mid \\
&\hspace{.7cm} y_m^{(l)} \in \mathbb{C}^{\times} \big\}, 
\end{align*}
where $x=(x_6^{(3)},x_4^{(4)}, x_3^{(3)},x_2^{(2)}, x_5^{(2)}, x_4^{(3)},x_3^{(2)},x_6^{(2)}, x_4^{(2)}, x_5^{(1)}, x_1^{(1)}, x_2^{(1)},x_3^{(1)}, x_4^{(1)}, x_6^{(1)})$ and $y=(y_5^{(3)},y_4^{(4)},y_3^{(3)},y_2^{(2)},y_6^{(2)},y_4^{(3)}, y_3^{(2)},y_5^{(2)},y_4^{(2)},y_6^{(1)},y_0^{(1)},y_2^{(1)}, y_3^{(1)},y_4^{(1)},y_5^{(1)})$.

From the explicit actions of $f_k$'s on $W(\varpi_6)$, we observe that $f_k^2 =0$, for all $k \in I$. Therefore, we have
$$Y_k(c)=(1+\frac{f_k}{c}){\check\alpha_k}(c) = (1+\frac{f_k}{c})c^{\check\alpha_k} \ \text{for all} \ k \in I.$$ 
Thus we have the explicit forms of $V_1(x)$ and $V_2(y)$ as follows. \\
$V_1 (x) = x_6^{(3)}x_6^{(2)}x_6^{(1)}(+,+,+,+,+,+) + 
\big(x_6^{(2)}x_6^{(1)}  + \frac{x_4^{(4)}x_4^{(3)}x_6^{(1)}}{x_6^{(3)}} + \frac{x_4^{(4)}x_4^{(3)}x_4^{(2)}x_4^{(1)}}{x_6^{(3)}x_6^{(2)}}\big) \newline
\times (+,+,+,+,-,-) + \big(x_4^{(3)}x_6^{(1)} + \frac{x_3^{(3)}x_5^{(2)}x_6^{(1)}}{x_4^{(4)}} + \frac{x_4^{(3)}x_4^{(2)}x_4^{(1)}}{x_6^{(2)}} + \frac{x_3^{(3)}x_5^{(2)}x_3^{(2)}x_4^{(1)}}{x_4^{(4)}x_4^{(3)}} + 	\frac{x_3^{(3)}x_5^{(2)}x_4^{(2)}x_4^{(1)}}{x_4^{(4)}x_6^{(2)}}\newline
+ \frac{x_3^{(3)}x_5^{(2)}x_3^{(2)}x_5^{(1)}x_3^{(1)}}{x_4^{(4)}x_4^{(3)}x_4^{(2)}}\big) (+,+,+,-,+,-)+  \big(x_5^{(2)}x_6^{(1)} + \frac{x_2^{(2)}x_5^{(2)}x_4^{(1)}}{x_3^{(3)}} + \frac{x_5^{(2)}x_3^{(2)}x_4^{(1)}}{x_4^{(3)}} \newline
+ \frac{x_5^{(2)}x_4^{(2)}x_4^{(1)}}{x_6^{(2)}} + \frac{x_2^{(2)}x_5^{(2)}x_5^{(1)}x_2^{(1)}}{x_3^{(3)}x_3^{(2)}} + \frac{x_2^{(2)}x_5^{(2)}x_5^{(1)}x_3^{(1)}}{x_3^{(3)}x_4^{(2)}}+ \frac{x_5^{(2)}x_3^{(2)}x_5^{(1)}x_3^{(1)}}{x_4^{(3)}x_4^{(2)}}\big)(+,+,-,+,+,-) \newline
+\big(x_3^{(3)}x_6^{(1)} + \frac{x_3^{(3)}x_3^{(2)}x_3^{(1)}}{x_5^{(2)}} + \frac{x_3^{(3)}x_3^{(2)}x_4^{(1)}}{x_4^{(3)}} + \frac{x_3^{(3)}x_4^{(2)}x_4^{(1)}}{x_6^{(2)}} + \frac{x_3^{(3)}x_3^{(2)}x_5^{(1)}x_3^{(1)}}{x_4^{(3)}x_4^{(2)}}\big)(+,+,+,-,-,+)\newline
+\big(x_5^{(2)}x_4^{(1)} + \frac{x_5^{(2)}x_5^{(1)}x_1^{(1)}}{x_2^{(2)}} + \frac{x_5^{(2)}x_5^{(1)}x_2^{(1)}}{x_3^{(2)}} + \frac{x_5^{(2)}x_5^{(1)}x_3^{(1)}}{x_4^{(2)}} \big)(+,-,+,+,+,-) + \big(x_4^{(4)}x_6^{(1)} \newline
+ \frac{x_4^{(4)}x_2^{(2)}x_4^{(1)}}{x_3^{(3)}} + \frac{x_4^{(4)}x_3^{(2)}x_3^{(1)}}{x_5^{(2)}} + \frac{x_4^{(4)}x_3^{(2)}x_4^{(1)}}{x_4^{(3)}} +\frac{x_4^{(4)}x_4^{(2)}x_4^{(1)}}{x_6^{(2)}} + \frac{x_4^{(4)}x_2^{(2)}x_4^{(3)}x_3^{(1)}}{x_3^{(3)}x_5^{(2)}}+ \frac{x_4^{(4)}x_2^{(2)}x_5^{(1)}x_2^{(1)}}{x_3^{(3)}x_3^{(2)}} \newline
+ 		\frac{x_4^{(4)}x_2^{(2)}x_5^{(1)}x_3^{(1)}}	{x_3^{(3)}x_4^{(2)}} + \frac{x_4^{(4)}x_3^{(2)}x_5^{(1)}x_3^{(1)}}{x_4^{(3)}x_4^{(2)}} + \frac{x_4^{(4)}x_2^{(2)}x_4^{(3)}x_4^{(2)}		x_2^{(1)}}{x_3^{(3)}x_5^{(2)}x_3^{(2)}}\big) (+,+,-,+,-,+) \newline
+x_5^{(2)}x_5^{(1)}(-,+,+,+,+,-) + \big(x_4^{(4)}x_4^{(1)} + \frac{x_4^{(4)}x_5^{(1)}x_1^{(1)}}{x_2^{(2)}} + \frac{x_4^{(4)}x_4^{(3)}x_3^{(1)}}{x_5^{(2)}} + \frac{x_4^{(4)}x_5^{(1)}x_2^{(1)}}{x_3^{(2)}} \newline
+ \frac{x_4^{(4)}x_5^{(1)}x_3^{(1)}}{x_4^{(2)}} + \frac{x_4^{(4)}x_4^{(3)}x_4^{(2)}x_1^{(1)}}{x_2^{(2)}x_5^{(2)}}+ \frac{x_4^{(4)}x_4^{(3)}x_4^{(2)}x_2^{(1)}}		{x_5^{(2)}x_3^{(2)}} \big)(+,-,+,+,-,+) + \big(x_6^{(3)}x_6^{(1)} + \frac{x_6^{(3)}x_2^{(2)}x_3^{(1)}}{x_4^{(4)}} \newline
+ \frac{x_6^{(3)}x_2^{(2)}x_4^{(1)}}{x_3^{(3)}} + \frac{x_6^{(3)}		x_3^{(2)}x_3^{(1)}}{x_5^{(2)}} +	\frac{x_6^{(3)}x_3^{(2)}x_4^{(1)}}{x_4^{(3)}} + \frac{x_6^{(3)}x_4^{(2)}x_4^{(1)}}{x_6^{(2)}} + \frac{x_6^{(3)}x_2^{(2)}x_6^{(2)}x_2^{(1)}}{x_4^{(4)}	x_4^{(3)}} + \frac{x_6^{(3)}x_2^{(2)}x_4^{(2)}x_2^{(1)}}{x_4^{(4)}x_3^{(2)}} \newline
+ \frac{x_6^{(3)}x_2^{(2)}x_4^{(3)}x_3^{(1)}}{x_3^{(3)}x_5^{(2)}} + \frac{x_6^{(3)}x_2^{(2)}x_5^{(1)}		x_2^{(1)}}{x_3^{(3)}x_3^{(2)}} + \frac{x_6^{(3)}x_2^{(2)}x_5^{(1)}x_3^{(1)}}{x_3^{(3)}x_4^{(2)}} + \frac{x_6^{(3)}x_3^{(2)}x_5^{(1)}x_3^{(1)}}{x_4^{(3)}x_4^{(2)}} + \frac{x_6^{(3)}		x_2^{(2)}x_4^{(3)}x_4^{(2)}x_2^{(1)}}{x_3^{(3)}x_5^{(2)}x_3^{(2)}}\big) \newline
\times (+,+,-,-,+,+)+ \big(x_4^{(4)}x_5^{(1)}  + \frac{x_4^{(4)}x_4^{(3)}x_4^{(2)}}{x_5^{(2)}}\big)(-,+,+,+,-,+) + \big(x_6^{(3)}x_4^{(1)} + \frac{x_6^{(3)}x_3^{(3)}x_3^{(1)}}{x_4^{(4)}} \newline
+ \frac{x_6^{(3)}x_5^{(1)}	x_1^{(1)}}{x_2^{(2)}} + \frac{x_6^{(3)}	x_4^{(3)}x_3^{(1)}}{x_5^{(2)}} +	\frac{x_6^{(3)}x_5^{(1)}x_2^{(1)}}{x_3^{(2)}} + \frac{x_6^{(3)}x_5^{(1)}x_3^{(1)}}{x_4^{(2)}} + \frac{x_6^{(3)}	x_3^{(3)}x_4^{(2)}x_1^{(1)}}{x_4^{(4)}x_2^{(2)}} + \frac{x_6^{(3)}x_3^{(3)}x_6^{(2)}x_2^{(1)}}{x_4^{(4)}x_4^{(3)}} \newline
+ \frac{x_6^{(3)}x_3^{(3)}x_4^{(2)}x_2^{(1)}}{x_4^{(4)}x_3^{(2)}} 	+ \frac{x_6^{(3)}x_4^{(3)}x_4^{(2)}x_1^{(1)}}{x_2^{(2)}x_5^{(2)}} + \frac{x_6^{(3)}x_4^{(3)}x_4^{(2)}x_2^{(1)}}{x_5^{(2)}x_3^{(2)}} + \frac{x_6^{(3)}x_3^{(3)}x_3^{(2)}x_6^{(2)}		x_1^{(1)}}{x_4^{(4)}x_2^{(2)}x_4^{(3)}}\big)(+,-,+,-,+,+) \newline
+ \big(x_6^{(1)} + \frac{x_2^{(2)}x_2^{(1)}}{x_6^{(3)}} + \frac{x_2^{(2)}x_3^{(1)}}{x_4^{(4)}} + \frac{x_2^{(2)}x_4^{(1)}}		{x_3^{(3)}} + \frac{x_3^{(2)}x_3^{(1)}}{x_5^{(2)}} + \frac{x_3^{(2)}x_4^{(1)}}{x_4^{(3)}} + \frac{x_4^{(2)}x_4^{(1)}}{x_6^{(2)}} + \frac{x_2^{(2)}x_6^{(2)}x_2^{(1)}}{x_4^{(4)}			x_4^{(3)}} \newline
+ \frac{x_2^{(2)}x_4^{(2)}x_2^{(1)}}{x_4^{(4)}x_3^{(2)}} + \frac{x_2^{(2)}x_4^{(3)}x_3^{(1)}}{x_3^{(3)}x_5^{(2)}} + \frac{x_2^{(2)}x_5^{(1)}x_2^{(1)}}{x_3^{(3)}x_3^{(2)}} 	+ \frac{x_2^{(2)}x_5^{(1)}x_3^{(1)}}{x_3^{(3)}x_4^{(2)}} + \frac{x_3^{(2)}x_5^{(1)}x_3^{(1)}}{x_4^{(3)}x_4^{(2)}} +\frac{x_2^{(2)}x_4^{(3)}x_4^{(2)}x_2^{(1)}}{x_3^{(3)}x_5^{(2)}x_3^{(2)}}\big)\newline
\times (+,+,-,-,-,-)+\big(x_6^{(3)}x_5^{(1)}  + \frac{x_6^{(3)}x_3^{(3)}x_4^{(2)}}{x_4^{(4)}} + \frac{x_6^{(3)}x_4^{(3)}x_4^{(2)}}{x_5^{(2)}} + \frac{x_6^{(3)}x_3^{(3)}x_3^{(2)}x_6^{(2)}}{x_4^{(4)}x_4^{(3)}}		\big)\newline
\times (-,+,+,-,+,+) + \big(x_6^{(3)}x_3^{(1)}  + \frac{x_6^{(3)}x_6^{(2)}x_1^{(1)}}{x_3^{(3)}} + \frac{x_6^{(3)}x_4^{(2)}x_1^{(1)}}{x_2^{(2)}} + \frac{x_6^{(3)}x_6^{(2)}			x_2^{(1)}}{x_4^{(3)}} + \frac{x_6^{(3)}x_4^{(2)}x_2^{(1)}}{x_3^{(2)}} \newline
+ \frac{x_6^{(3)}x_3^{(2)}x_6^{(2)}x_1^{(1)}}{x_2^{(2)}x_4^{(3)}}	\big) (+,-,-,+,+,+) + \big(x_4^{(1)} + 			\frac{x_3^{(3)}x_2^{(1)}}{x_6^{(3)}} + \frac{x_3^{(3)}x_3^{(1)}}{x_4^{(4)}} + \frac{x_5^{(1)}x_1^{(1)}}{x_2^{(2)}} + \frac{x_4^{(3)}x_3^{(1)}}{x_5^{(2)}} \newline
+ \frac{x_5^{(1)}x_2^{(1)}}		{x_3^{(2)}} + \frac{x_5^{(1)}x_3^{(1)}}{x_4^{(2)}} + \frac{x_3^{(3)}x_3^{(2)}x_1^{(1)}}{x_6^{(3)}x_2^{(2)}} + \frac{x_3^{(3)}x_4^{(2)}x_1^{(1)}}{x_4^{(4)}x_2^{(2)}} + \frac{x_3^{(3)}		x_6^{(2)}x_2^{(1)}}{x_4^{(4)}x_4^{(3)}} + \frac{x_3^{(3)}x_4^{(2)}x_2^{(1)}}{x_4^{(4)}x_3^{(2)}} + \frac{x_4^{(3)}x_4^{(2)}x_1^{(1)}}{x_2^{(2)}x_5^{(2)}} \newline
+ \frac{x_4^{(3)}x_4^{(2)}		x_2^{(1)}}	{x_5^{(2)}x_3^{(2)}} +\frac{x_3^{(3)}x_3^{(2)}x_6^{(2)}x_1^{(1)}}{x_4^{(4)}x_2^{(2)}x_4^{(3)}}\big)(+,-,+,-,-,-)+
 \big(x_6^{(3)}x_4^{(2)}  + \frac{x_6^{(3)}x_2^{(2)}x_6^{(2)}}{x_3^{(3)}} + \frac{x_6^{(3)}x_3^{(2)}x_6^{(2)}}{x_4^{(3)}} \big)\newline
 \times (-,+,-,+,+,+) + \big(x_5^{(1)} + \frac{x_3^{(3)}x_3^{(2)}}		{x_6^{(3)}} + \frac{x_3^{(3)}x_4^{(2)}}{x_4^{(4)}} + \frac{x_4^{(3)}x_4^{(2)}}	{x_5^{(2)}} + \frac{x_3^{(3)}x_3^{(2)}x_6^{(2)}}{x_4^{(4)}x_4^{(3)}} \big)(-,+,+,-,-,-) \newline 
 + \big(x_3^{(1)} + 		\frac{x_4^{(4)}x_2^{(1)}}{x_6^{(3)}} + \frac{x_6^{(2)}x_1^{(1)}}{x_3^{(3)}} + \frac{x_4^{(2)}x_1^{(1)}}{x_2^{(2)}} + \frac{x_6^{(2)}x_2^{(1)}}{x_4^{(3)}} + \frac{x_4^{(2)}x_2^{(1)}}		{x_3^{(2)}} + \frac{x_4^{(4)}x_4^{(3)}x_1^{(1)}}{x_6^{(3)}x_3^{(3)}} + \frac{x_4^{(4)}x_3^{(2)}x_1^{(1)}}{x_6^{(3)}x_2^{(2)}} \newline
 + \frac{x_3^{(2)}	x_6^{(2)}x_1^{(1)}}{x_2^{(2)}x_4^{(3)}} 	\big)(+,-,-,+,-,-)+
x_6^{(3)}x_6^{(2)}(-,-,+,+,+,+) + \big(x_4^{(2)} + \frac{x_4^{(4)}x_3^{(2)}}{x_6^{(3)}} + \frac{x_2^{(2)}x_6^{(2)}}{x_3^{(3)}} \newline
+ \frac{x_3^{(2)}x_6^{(2)}}{x_4^{(3)}} + \frac{x_4^{(4)}			x_2^{(2)}x_4^{(3)}}{x_6^{(3)}x_3^{(3)}} \big)(-,+,-,+,-,-) + \big(x_2^{(1)} + \frac{x_5^{(2)}x_1^{(1)}}{x_4^{(4)}} + \frac{x_4^{(3)}x_1^{(1)}}{x_3^{(3)}} + \frac{x_3^{(2)}x_1^{(1)}}		{x_2^{(2)}} \big)\newline
\times (+,-,-,-,+,-)+ \big(x_6^{(2)} + \frac{x_4^{(4)}x_4^{(3)}}{x_6^{(3)}} \big)(-,-,+,+,-,-) + \big(x_3^{(2)} + \frac{x_2^{(2)}x_5^{(2)}}{x_4^{(4)}} + \frac{x_2^{(2)}x_4^{(3)}}{x_3^{(3)}} \big)\newline
\times(-,+,-,-,+,-) + 			x_1^{(1)}(+,-,-,-,-,+)+
 \big(x_4^{(3)} + \frac{x_3^{(3)}x_5^{(2)}}{x_4^{(4)}} \big)(-,-,+,-,+,-) \newline
 + x_2^{(2)}(-,+,-,-,-,+)+
x_5^{(2)}(-,-,-,+,+,-)+x_3^{(3)}(-,-,+,-,-,+)\newline
+x_4^{(4)}(-,-,-,+,-,+)+
x_6^{(3)}(-,-,-,-,+,+)+
(-,-,-,-,-,-)$,\\

$V_2 (y) = y_5^{(3)}y_5^{(2)}y_5^{(1)}(-,+,+,+,+,-) + 
\big(y_5^{(2)}y_5^{(1)}  + \frac{y_4^{(4)}y_4^{(3)}y_5^{(1)}}{y_5^{(3)}} + \frac{y_4^{(4)}y_4^{(3)}y_4^{(2)}y_4^{(1)}}{y_5^{(3)}y_5^{(2)}}\big)\newline
\times (-,+,+,+,-,+) +  \big(y_4^{(3)}y_5^{(1)} + \frac{y_3^{(3)}y_6^{(2)}y_5^{(1)}}{y_4^{(4)}} + \frac{y_4^{(3)}y_4^{(2)}y_4^{(1)}}{y_5^{(2)}} + \frac{y_3^{(3)}y_6^{(2)}y_3^{(2)}y_4^{(1)}}{y_4^{(4)}y_4^{(3)}} + 	\frac{y_3^{(3)}y_6^{(2)}y_4^{(2)}y_4^{(1)}}{y_4^{(4)}y_5^{(2)}} \newline
+ \frac{y_3^{(3)}y_6^{(2)}y_3^{(2)}y_6^{(1)}y_3^{(1)}}{y_4^{(4)}y_4^{(3)}y_4^{(2)}}\big)(-,+,+,-,+,+)+
\big(y_6^{(2)}y_5^{(1)} + \frac{y_2^{(2)}y_6^{(2)}y_4^{(1)}}{y_3^{(3)}} + \frac{y_6^{(2)}y_3^{(2)}y_4^{(1)}}{y_4^{(3)}} + \frac{y_6^{(2)}y_4^{(2)}y_4^{(1)}}{y_5^{(2)}} \newline
+ \frac{y_2^{(2)}y_6^{(2)}y_6^{(1)}y_2^{(1)}}{y_3^{(3)}y_3^{(2)}} + \frac{y_2^{(2)}y_6^{(2)}y_6^{(1)}y_3^{(1)}}{y_3^{(3)}y_4^{(2)}}+ \frac{y_6^{(2)}y_3^{(2)}y_6^{(1)}y_3^{(1)}}		{y_4^{(3)}y_4^{(2)}}\big) (-,+,-,+,+,+)+\big(y_3^{(3)}y_5^{(1)} + \frac{y_3^{(3)}y_3^{(2)}y_3^{(1)}}{y_6^{(2)}}\newline
+ \frac{y_3^{(3)}y_3^{(2)}y_4^{(1)}}{y_4^{(3)}} + 				\frac{y_3^{(3)}y_4^{(2)}y_4^{(1)}}{y_5^{(2)}} + \frac{y_3^{(3)}y_3^{(2)}y_6^{(1)}y_3^{(1)}}{y_4^{(3)}y_4^{(2)}}\big)(-,+,+,-,-,-)+
\big(y_6^{(2)}y_4^{(1)} + \frac{y_6^{(2)}y_6^{(1)}y_0^{(1)}}{y_2^{(2)}} \newline
+ \frac{y_6^{(2)}y_6^{(1)}y_2^{(1)}}{y_3^{(2)}} + \frac{y_6^{(2)}y_6^{(1)}y_3^{(1)}}{y_4^{(2)}} \big)(-,-,+,+,+,+) + 		\big(y_4^{(4)}y_5^{(1)} + \frac{y_4^{(4)}y_2^{(2)}y_4^{(1)}}{y_3^{(3)}} + \frac{y_4^{(4)}y_3^{(2)}y_3^{(1)}}{y_6^{(2)}} \newline
+ \frac{y_4^{(4)}y_3^{(2)}y_4^{(1)}}{y_4^{(3)}} +				\frac{y_4^{(4)}y_4^{(2)}y_4^{(1)}}{y_5^{(2)}} + \frac{y_4^{(4)}y_2^{(2)}y_4^{(3)}y_3^{(1)}}{y_3^{(3)}y_6^{(2)}}+ \frac{y_4^{(4)}y_2^{(2)}y_6^{(1)}y_2^{(1)}}{y_3^{(3)}y_3^{(2)}} + 		\frac{y_4^{(4)}y_2^{(2)}y_6^{(1)}y_3^{(1)}}	{y_3^{(3)}y_4^{(2)}} + \frac{y_4^{(4)}y_3^{(2)}y_6^{(1)}y_3^{(1)}}{y_4^{(3)}y_4^{(2)}}\newline
+\frac{y_4^{(4)}y_2^{(2)}y_4^{(3)}y_4^{(2)}		y_2^{(1)}}{y_3^{(3)}y_6^{(2)}y_3^{(2)}}\big) (-,+,-,+,-,-)+
y_6^{(2)}y_6^{(1)}(+,+,+,+,+,+) + \big(y_4^{(4)}y_4^{(1)} + \frac{y_4^{(4)}y_6^{(1)}y_0^{(1)}}{y_2^{(2)}} \newline
+ \frac{y_4^{(4)}y_4^{(3)}y_3^{(1)}}{y_6^{(2)}} + \frac{y_4^{(4)}y_6^{(1)}			y_2^{(1)}}{y_3^{(2)}} + \frac{y_4^{(4)}y_6^{(1)}y_3^{(1)}}{y_4^{(2)}} + \frac{y_4^{(4)}y_4^{(3)}y_4^{(2)}y_0^{(1)}}{y_2^{(2)}y_6^{(2)}}+ \frac{y_4^{(4)}y_4^{(3)}y_4^{(2)}x_2^{(1)}}		{y_6^{(2)}y_3^{(2)}} \big)(-,-,+,+,-,-) \newline
 + \big(y_5^{(3)}y_5^{(1)} + \frac{y_5^{(3)}y_2^{(2)}y_3^{(1)}}{y_4^{(4)}} + \frac{y_5^{(3)}y_2^{(2)}y_4^{(1)}}{y_3^{(3)}} + \frac{y_5^{(3)}			y_3^{(2)}y_3^{(1)}}{y_6^{(2)}} +	\frac{y_5^{(3)}y_3^{(2)}y_4^{(1)}}{y_4^{(3)}} + \frac{y_5^{(3)}y_4^{(2)}y_4^{(1)}}{y_5^{(2)}} + \frac{y_5^{(3)}y_2^{(2)}y_5^{(2)}y_2^{(1)}}{y_4^{(4)}	y_4^{(3)}} \newline
 + \frac{y_5^{(3)}y_2^{(2)}y_4^{(2)}y_2^{(1)}}{y_4^{(4)}y_3^{(2)}} + \frac{y_5^{(3)}y_2^{(2)}y_4^{(3)}y_3^{(1)}}{y_3^{(3)}y_6^{(2)}} + \frac{y_5^{(3)}y_2^{(2)}y_6^{(1)}		y_2^{(1)}}{y_3^{(3)}y_3^{(2)}} + \frac{y_5^{(3)}y_2^{(2)}y_6^{(1)}y_3^{(1)}}{y_3^{(3)}y_4^{(2)}} + \frac{y_5^{(3)}y_3^{(2)}y_6^{(1)}y_3^{(1)}}{y_4^{(3)}y_4^{(2)}} \newline + \frac{y_5^{(3)}		y_2^{(2)}y_4^{(3)}y_4^{(2)}y_2^{(1)}}{y_3^{(3)}y_6^{(2)}y_3^{(2)}}\big)  (-,+,-,-,+,-)+\big(y_4^{(4)}y_6^{(1)}  + \frac{y_4^{(4)}y_4^{(3)}y_4^{(2)}}{y_6^{(2)}}\big)(+,+,+,+,-,-) \newline
 + \big(y_5^{(3)}y_4^{(1)} + \frac{y_5^{(3)}y_3^{(3)}y_3^{(1)}}{y_4^{(4)}} + \frac{y_5^{(3)}y_6^{(1)}	y_0^{(1)}}{y_2^{(2)}} + \frac{y_5^{(3)}	y_4^{(3)}y_3^{(1)}}{y_6^{(2)}} +	\frac{y_5^{(3)}y_6^{(1)}y_2^{(1)}}{y_3^{(2)}} + \frac{y_5^{(3)}y_6^{(1)}y_3^{(1)}}{y_4^{(2)}} + \frac{y_5^{(3)}	y_3^{(3)}y_4^{(2)}y_0^{(1)}}{y_4^{(4)}y_2^{(2)}} \newline
 + \frac{y_5^{(3)}y_3^{(3)}y_5^{(2)}y_2^{(1)}}{y_4^{(4)}y_4^{(3)}} + \frac{y_5^{(3)}y_3^{(3)}y_4^{(2)}y_2^{(1)}}{y_4^{(4)}y_3^{(2)}} 	+ \frac{y_5^{(3)}y_4^{(3)}y_4^{(2)}y_0^{(1)}}{y_2^{(2)}y_6^{(2)}} + \frac{y_5^{(3)}y_4^{(3)}y_4^{(2)}y_2^{(1)}}{y_6^{(2)}y_3^{(2)}} + \frac{y_5^{(3)}y_3^{(3)}y_3^{(2)}y_5^{(2)}		y_0^{(1)}}{y_4^{(4)}y_2^{(2)}y_4^{(3)}}\big) \newline
 \times (-,-,+,-,+,-) + \big(y_5^{(1)} + \frac{y_2^{(2)}y_2^{(1)}}{y_5^{(3)}} + \frac{y_2^{(2)}y_3^{(1)}}{y_4^{(4)}} + \frac{y_2^{(2)}y_4^{(1)}}		{y_3^{(3)}} + \frac{y_3^{(2)}y_3^{(1)}}{y_6^{(2)}} + \frac{y_3^{(2)}y_4^{(1)}}{y_4^{(3)}} + \frac{y_4^{(2)}y_4^{(1)}}{y_5^{(2)}} \newline
 + \frac{y_2^{(2)}y_5^{(2)}y_2^{(1)}}{y_4^{(4)}			y_4^{(3)}} + \frac{y_2^{(2)}y_4^{(2)}y_2^{(1)}}{y_4^{(4)}y_3^{(2)}} + \frac{y_2^{(2)}y_4^{(3)}y_3^{(1)}}{y_3^{(3)}y_6^{(2)}} + \frac{y_2^{(2)}y_6^{(1)}y_2^{(1)}}{y_3^{(3)}y_3^{(2)}} 	+ \frac{y_2^{(2)}y_6^{(1)}y_3^{(1)}}{y_3^{(3)}y_4^{(2)}} + \frac{y_3^{(2)}y_6^{(1)}y_3^{(1)}}{y_4^{(3)}y_4^{(2)}} \newline
 +\frac{y_2^{(2)}y_4^{(3)}y_4^{(2)}y_2^{(1)}}{y_3^{(3)}y_6^{(2)}		y_3^{(2)}}\big) (-,+,-,-,-,+)+\big(y_5^{(3)}y_6^{(1)}  + \frac{y_5^{(3)}y_3^{(3)}y_4^{(2)}}{y_4^{(4)}} + \frac{y_5^{(3)}y_4^{(3)}y_4^{(2)}}{y_6^{(2)}} + \frac{y_5^{(3)}y_3^{(3)}y_3^{(2)}y_5^{(2)}}{y_4^{(4)}y_4^{(3)}}		\big) \newline
 \times (+,+,+,-,+,-) + \big(y_5^{(3)}y_3^{(1)}  + \frac{y_5^{(3)}y_5^{(2)}y_0^{(1)}}{y_3^{(3)}} + \frac{y_5^{(3)}y_4^{(2)}y_0^{(1)}}{y_2^{(2)}} 
 + \frac{y_5^{(3)}y_5^{(2)}				y_2^{(1)}}{y_4^{(3)}} + \frac{y_5^{(3)}y_4^{(2)}y_2^{(1)}}{y_3^{(2)}} \newline
 + \frac{y_5^{(3)}y_3^{(2)}y_5^{(2)}y_0^{(1)}}{y_2^{(2)}y_4^{(3)}}	\big)(-,-,-,+,+,-) + \big(y_4^{(1)} + 			\frac{y_3^{(3)}y_2^{(1)}}{y_5^{(3)}} + \frac{y_3^{(3)}y_3^{(1)}}{y_4^{(4)}} + \frac{y_6^{(1)}y_0^{(1)}}{y_2^{(2)}} + \frac{y_4^{(3)}y_3^{(1)}}{y_6^{(2)}} \newline
 + \frac{y_6^{(1)}y_2^{(1)}}		{y_3^{(2)}} + \frac{y_6^{(1)}y_3^{(1)}}{y_4^{(2)}} + \frac{y_3^{(3)}y_3^{(2)}y_0^{(1)}}{y_5^{(3)}y_2^{(2)}} + \frac{y_3^{(3)}y_4^{(2)}y_0^{(1)}}{y_4^{(4)}y_2^{(2)}} + \frac{y_3^{(3)}		y_5^{(2)}y_2^{(1)}}{y_4^{(4)}y_4^{(3)}} + \frac{y_3^{(3)}y_4^{(2)}y_2^{(1)}}{y_4^{(4)}y_3^{(2)}} + \frac{y_4^{(3)}y_4^{(2)}y_0^{(1)}}{y_2^{(2)}y_6^{(2)}} \newline
 + \frac{y_4^{(3)}y_4^{(2)}		y_2^{(1)}}	{y_6^{(2)}y_3^{(2)}} +\frac{y_3^{(3)}y_3^{(2)}y_5^{(2)}y_0^{(1)}}{y_4^{(4)}y_2^{(2)}y_4^{(3)}}\big)(-,-,+,-,-,+)+
 \big(y_5^{(3)}y_4^{(2)}  + \frac{y_5^{(3)}y_2^{(2)}y_5^{(2)}}{y_3^{(3)}} + \frac{y_5^{(3)}y_3^{(2)}y_5^{(2)}}{y_4^{(3)}} \big)\newline
 \times (+,+,-,+,+,-) + \big(y_6^{(1)} + \frac{y_3^{(3)}y_3^{(2)}}		{y_5^{(3)}} + \frac{y_3^{(3)}y_4^{(2)}}{y_4^{(4)}} + \frac{y_4^{(3)}y_4^{(2)}}	{y_6^{(2)}} + \frac{y_3^{(3)}y_3^{(2)}y_5^{(2)}}{y_4^{(4)}y_4^{(3)}} \big)(+,+,+,-,-,+) \newline
 + \big(y_3^{(1)} + 	\frac{y_4^{(4)}y_2^{(1)}}{y_5^{(3)}}   + \frac{y_5^{(2)}y_0^{(1)}}{y_3^{(3)}} + \frac{y_4^{(2)}y_0^{(1)}}{y_2^{(2)}} + \frac{y_5^{(2)}y_2^{(1)}}{y_4^{(3)}} + \frac{y_4^{(2)}y_2^{(1)}}		{y_3^{(2)}} + \frac{y_4^{(4)}y_4^{(3)}y_0^{(1)}}{y_5^{(3)}y_3^{(3)}}  + \frac{y_4^{(4)}y_3^{(2)}y_0^{(1)}}{y_5^{(3)}y_2^{(2)}} \newline + \frac{y_3^{(2)}	y_5^{(2)}y_0^{(1)}}{y_2^{(2)}y_4^{(3)}} 	\big)(-,-,-,+,-,+)+
y_5^{(3)}y_5^{(2)}(+,-,+,+,+,-) + \big(y_4^{(2)} + \frac{y_4^{(4)}y_3^{(2)}}{y_5^{(3)}} + \frac{y_2^{(2)}y_5^{(2)}}{y_3^{(3)}} \newline
+ \frac{y_3^{(2)}y_5^{(2)}}{y_4^{(3)}} + \frac{y_4^{(4)}			y_2^{(2)}y_4^{(3)}}{y_5^{(3)}y_3^{(3)}} \big)(+,+,-,+,-,+) + \big(y_2^{(1)} + \frac{y_6^{(2)}y_0^{(1)}}{y_4^{(4)}} + \frac{y_4^{(3)}y_0^{(1)}}{y_3^{(3)}} + \frac{y_3^{(2)}y_0^{(1)}}		{y_2^{(2)}} \big)\newline
\times(-,-,-,-,+,+)+\big(y_5^{(2)} + \frac{y_4^{(4)}y_4^{(3)}}{y_5^{(3)}} \big)(+,-,+,+,-,+) + \big(y_3^{(2)} + \frac{y_2^{(2)}y_6^{(2)}}{y_4^{(4)}} + \frac{y_2^{(2)}y_4^{(3)}}{y_3^{(3)}} \big)\newline
\times (+,+,-,-,+,+) + 			y_0^{(1)}(-,-,-,-,-,-)+
\big(y_4^{(3)} + \frac{y_3^{(3)}y_6^{(2)}}{y_4^{(4)}} \big)(+,-,+,-,+,+)\newline
 + y_2^{(2)}(+,+,-,-,-,-)+
y_6^{(2)}(+,-,-,+,+,+)+y_3^{(3)}(+,-,+,-,-,-) \newline
+y_4^{(4)}(+,-,-,+,-,-)+
y_5^{(3)}(+,-,-,-,+,-)+
(+,-,-,-,-,+)$.\\

Now for a given $x$ we solve the equation
\begin{align}
V_2(y) = a(x)\sigma (V_1(x)) \label{ytox}.
\end{align}
where $a(x)$ is a rational function in $x$ and the action of $\sigma$ on $V_1(x)$ is induced by its action on $W(\varpi_6)$. Though this equation is over-determined, it can be solved uniquely by comparing the coefficients of the basis vectors of $W(\varpi_6)$. We give the explicit  solutions of $a(x)$, and the variables $y_m^{(l)}$ below.
\begin{lemma} \label{yinx}
The rational function $a(x)$ and the complete solution of (\ref{ytox}) is:
\begin{align*} 
a(x) &=\frac{1}{x_5^{(2)}x_5^{(1)}}, \\
y_0^{(1)} 	&=\frac{x_6^{(3)}x_6^{(2)}x_6^{(1)}}{x_5^{(2)}x_5^{(1)}} , \qquad \qquad y_6^{(1)} =\frac{1}{x_5^{(2)}} , \qquad \qquad y_6^{(2)} =\frac{1}{x_5^{(1)}},\\
y_2^{(1)} 	&=\Big( \frac{x_5^{(2)}x_5^{(1)}}{x_6^{(3)}x_6^{(2)}}+\frac{x_6^{(1)}x_5^{(2)}x_5^{(1)}}{x_6^{(3)}x_4^{(2)}x_4^{(1)}}+\frac{x_6^{(2)}x_6^{(1)}  x_5^{(2)}x_5^{(1)}}{x_4^{(4)}x_4^{(3)}x_4^{(2)}x_4^{(1)}}\Big)^{-1}, \\
y_2^{(2)} 	&=\frac{x_4^{(4)}x_4^{(3)}x_4^{(2)}x_4^{(1)}}{(x_5^{(2)})^2(x_5^{(1)})^2}\Big( \frac{x_5^{(2)}x_5^{(1)}}{x_6^{(3)}x_6^{(2)}}+\frac{x_6^{(1)}x_5^{(2)}x_5^{(1)}}{x_6^{(3)}x_4^{(2)}x_4^{(1)}}+\frac{x_6^{(2)}x_6^{(1)}  x_5^{(2)}x_5^{(1)}}{x_4^{(4)}x_4^{(3)}x_4^{(2)}x_4^{(1)}}\Big), \\
y_3^{(1)} 	&=\Big( \frac{x_5^{(2)}x_5^{(1)}}{x_6^{(3)}x_4^{(2)}}+\frac{x_6^{(2)}x_5^{(2)}x_5^{(1)}}{x_4^{(4)}x_4^{(3)}x_4^{(2)}}+\frac{x_5^{(2)}x_4^{(1)}}{x_6^{(3)}x_3^{(1)}}+\frac{x_6^{(2)}x_5^{(2)}x_4^{(1)}}{x_4^{(4)}x_4^{(3)}x_3^{(1)}}+\frac{x_5^{(2)}x_4^{(2)}x_4^{(1)}}{x_4^{(4)}x_3^{(2)}x_3^{(1)}} \\
		& \qquad +\frac{x_4^{(3)}x_4^{(2)}x_4^{(1)}}{x_3^{(3)}x_3^{(2)}x_3^{(1)}}\Big)^{-1},\\
y_3^{(2)} 	&= \frac{x_3^{(3)}x_3^{(2)}x_3^{(1)}}{(x_5^{(2)})^2(x_5^{(1)})^2} \Big( \frac{x_5^{(2)}x_5^{(1)}}{x_6^{(3)}x_4^{(2)}}+\frac{x_6^{(2)}x_5^{(2)}x_5^{(1)}}{x_4^{(4)}x_4^{(3)}x_4^{(2)}}+\frac{x_5^{(2)}x_4^{(1)}}{x_6^{(3)}x_3^{(1)}}+\frac{x_6^{(2)}x_5^{(2)}x_4^{(1)}}{x_4^{(4)}x_4^{(3)}x_3^{(1)}} \\	
		& \qquad +\frac{x_5^{(2)}x_4^{(2)}x_4^{(1)}}{x_4^{(4)}x_3^{(2)}x_3^{(1)}}+\frac{x_4^{(3)}x_4^{(2)}x_4^{(1)}}{x_3^{(3)}x_3^{(2)}x_3^{(1)}}\Big)\Big(\frac{x_3^{(3)}x_3^{(2)}x_3^{(1)}}{x_4^{(4)}x_4^{(3)}x_4^{(2)}}+\frac{x_4^{(1)}x_3^{(3)}x_3^{(2)}}{x_5^{(1)}x_4^{(4)}x_4^{(3)}}+\frac{x_4^{(2)}x_4^{(1)}x_3^{(3)}}{x_6^{(2)}x_5^{(1)}x_4^{(4)}} \\
		& \qquad +\frac{x_6^{(1)}x_3^{(3)}}{x_5^{(1)}x_4^{(4)}}+\frac{x_4^{(3)}x_4^{(2)}x_4^{(1)}}{x_6^{(2)}x_5^{(2)}x_5^{(1)}}+\frac{x_6^{(1)}x_4^{(3)}}{x_5^{(2)}x_5^{(1)}} \Big)^{-1},\\
y_3^{(3)} 	&= \frac{x_3^{(3)}x_3^{(2)}x_3^{(1)}}{x_4^{(4)}x_4^{(3)}x_4^{(2)}}+\frac{x_4^{(1)}x_3^{(3)}x_3^{(2)}}{x_5^{(1)}x_4^{(4)}x_4^{(3)}}+\frac{x_4^{(2)}x_4^{(1)}x_3^{(3)}}{x_6^{(2)}x_5^{(1)}x_4^{(4)}}+\frac{x_6^{(1)}x_3^{(3)}}{x_5^{(1)}x_4^{(4)}}+\frac{x_4^{(3)}x_4^{(2)}x_4^{(1)}}{x_6^{(2)}x_5^{(2)}x_5^{(1)}} \\
		& \qquad +\frac{x_6^{(1)}x_4^{(3)}}{x_5^{(2)}x_5^{(1)}}, \\
y_4^{(1)} 	&= \Big( \frac{x_5^{(2)}}{x_6^{(3)}}+\frac{x_6^{(2)}x_5^{(2)}}{x_4^{(4)}x_4^{(3)}}+\frac{x_5^{(2)}x_4^{(2)}}{x_4^{(4)}x_3^{(2)}}+\frac{x_4^{(3)}x_4^{(2)}}{x_3^{(3)}x_3^{(2)}}+\frac{x_5^{(2)}x_3^{(1)}}{x_4^{(4)}x_2^{(1)}}+\frac{x_4^{(3)}x_3^{(1)}}{x_3^{(3)}x_2^{(1)}}+\frac{x_3^{(2)}x_3^{(1)}}{x_2^{(2)}x_2^{(1)}} \Big)^{-1}, \\
y_4^{(2)} 	&=  \frac{x_2^{(2)}x_2^{(1)}}{(x_5^{(2)})^2(x_5^{(1)})^2} \Big( \frac{x_5^{(2)}}{x_6^{(3)}}+\frac{x_6^{(2)}x_5^{(2)}}{x_4^{(4)}x_4^{(3)}}+\frac{x_5^{(2)}x_4^{(2)}}{x_4^{(4)}x_3^{(2)}}+\frac{x_4^{(3)}x_4^{(2)}}{x_3^{(3)}x_3^{(2)}}+\frac{x_5^{(2)}x_3^{(1)}}{x_4^{(4)}x_2^{(1)}}+\frac{x_4^{(3)}x_3^{(1)}}{x_3^{(3)}x_2^{(1)}} \\ 
		& \qquad  +\frac{x_3^{(2)}x_3^{(1)}}{x_2^{(2)}x_2^{(1)}} \Big)\Big( \frac{x_2^{(2)}x_2^{(1)}}{x_5^{(1)}x_4^{(4)}x_3^{(2)}}+\frac{x_4^{(3)}x_2^{(2)}x_2^{(1)}}{x_5^{(2)}x_5^{(1)}x_3^{(3)}x_3^{(2)}}+\frac{x_3^{(1)}x_2^{(2)}}{x_5^{(1)}x_4^{(4)}x_4^{(2)}}\\
		& \qquad +\frac{x_4^{(3)}x_3^{(1)}x_2^{(2)}}{x_5^{(2)}x_5^{(1)}x_4^{(2)}x_3^{(3)}}+\frac{x_4^{(1)}x_2^{(2)}}{(x_5^{(1)})^2 x_4^{(4)}} + \frac{x_4^{(3)}x_4^{(1)}x_2^{(2)}}{x_5^{(2)} (x_5^{(1)})^2 x_3^{(3)}}+\frac{x_3^{(2)}x_3^{(1)}}{x_5^{(2)}x_5^{(1)}x_4^{(2)}} \\
		& \qquad +\frac{x_4^{(1)}x_3^{(2)}}{x_5^{(2)} (x_5^{(1)})^2} \Big)^{-1},\\
y_4^{(3)} 	&= \Big( \frac{x_2^{(2)}x_2^{(1)}}{x_5^{(1)}x_4^{(4)}x_3^{(2)}}+\frac{x_4^{(3)}x_2^{(2)}x_2^{(1)}}{x_5^{(2)}x_5^{(1)}x_3^{(3)}x_3^{(2)}}+\frac{x_3^{(1)}x_2^{(2)}}{x_5^{(1)}x_4^{(4)}x_4^{(2)}}+\frac{x_4^{(3)}x_3^{(1)}x_2^{(2)}}{x_5^{(2)}x_5^{(1)}x_4^{(2)}x_3^{(3)}}\\
		& \qquad +\frac{x_4^{(1)}x_2^{(2)}}{(x_5^{(1)})^2 x_4^{(4)}} +\frac{x_4^{(3)}x_4^{(1)}x_2^{(2)}}{x_5^{(2)} (x_5^{(1)})^2 x_3^{(3)}}+\frac{x_3^{(2)}x_3^{(1)}}{x_5^{(2)}x_5^{(1)}x_4^{(2)}}+\frac{x_4^{(1)}x_3^{(2)}}{x_5^{(2)} (x_5^{(1)})^2} \Big) \Big(\frac{x_2^{(2)}x_2^{(1)}}{x_3^{(3)}x_3^{(2)}}\\
		& \qquad +\frac{x_3^{(1)}x_2^{(2)}}{x_4^{(2)}x_3^{(3)}}+\frac{x_4^{(1)}x_2^{(2)}}{x_5^{(1)}x_3^{(3)}}+ \frac{x_3^{(2)}x_3^{(1)}}{x_4^{(3)}x_4^{(2)}}+\frac{x_4^{(1)}x_3^{(2)}}{x_5^{(1)}x_4^{(3)}}+\frac{x_4^{(2)}x_4^{(1)}}{x_6^{(2)}x_5^{(1)}}+\frac{x_6^{(1)}}{x_5^{(1)}}\Big)^{-1}, \\
y_4^{(4)} 	&= \frac{x_2^{(2)}x_2^{(1)}}{x_3^{(3)}x_3^{(2)}}+\frac{x_3^{(1)}x_2^{(2)}}{x_4^{(2)}x_3^{(3)}}+\frac{x_4^{(1)}x_2^{(2)}}{x_5^{(1)}x_3^{(3)}}+\frac{x_3^{(2)}x_3^{(1)}}{x_4^{(3)}x_4^{(2)}}+\frac{x_4^{(1)}x_3^{(2)}}{x_5^{(1)}x_4^{(3)}}+\frac{x_4^{(2)}x_4^{(1)}}{x_6^{(2)}x_5^{(1)}}+\frac{x_6^{(1)}}{x_5^{(1)}}, \\
y_5^{(1)} 	&= \Big( \frac{x_5^{(2)}}{x_4^{(4)}}+\frac{x_4^{(3)}}{x_3^{(3)}}+\frac{x_3^{(2)}}{x_2^{(2)}}+\frac{x_2^{(1)}}{x_1^{(1)}} \Big)^{-1}, \\
y_5^{(2)} 	&=  \frac{x_1^{(1)}}{x_5^{(2)}x_5^{(1)}}\Big( \frac{x_5^{(2)}}{x_4^{(4)}}+\frac{x_4^{(3)}}{x_3^{(3)}}+\frac{x_3^{(2)}}{x_2^{(2)}}+\frac{x_2^{(1)}}{x_1^{(1)}} \Big) \Big( \frac{x_1^{(1)}}{x_2^{(2)}}+\frac{x_2^{(1)}}{x_3^{(2)}}+\frac{x_3^{(1)}}{x_4^{(2)}}+\frac{x_4^{(1)}}{x_5^{(1)}} \Big)^{-1}, \\
y_5^{(3)} 	&=  \frac{x_1^{(1)}}{x_2^{(2)}}+\frac{x_2^{(1)}}{x_3^{(2)}}+\frac{x_3^{(1)}}{x_4^{(2)}}+\frac{x_4^{(1)}}{x_5^{(1)}}. 
\end{align*}
\end{lemma}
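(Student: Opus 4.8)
The plan is to convert the single vector identity (\ref{ytox}) into a finite system of scalar equations by expanding both sides in the $32$-element basis of $W(\varpi_6)$ and matching coefficients of each basis vector. The first thing I would make completely explicit is the permutation of the basis induced by $\sigma$. For a basis vector $v=(i_1,\dots,i_6)$, consider $v^{\flat}:=(-i_6,-i_5,-i_4,-i_3,-i_2,-i_1)$, obtained by reversing the string and flipping every sign. Using the displayed table of pairings $\langle\check{\alpha}_k,{\rm wt}(v)\rangle$ together with $\sigma(\check{\alpha}_k)=\check{\alpha}_{\sigma(k)}$, one checks case-by-case that $\langle\check{\alpha}_k,{\rm wt}(v^{\flat})\rangle=\langle\check{\alpha}_{\sigma(k)},{\rm wt}(v)\rangle$ for every $k\in I$ (the pairs $0\leftrightarrow 6$ and $1\leftrightarrow 5$ match the two forks, $2\leftrightarrow 4$ the two inner nodes, and $3$ is fixed). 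Since the $32$ weights are distinct and $\sigma(v)$ is by definition the unique basis vector with ${\rm wt}(\sigma(v))=\sigma({\rm wt}(v))$, this yields the closed form
$$\sigma(i_1,i_2,i_3,i_4,i_5,i_6)=(-i_6,-i_5,-i_4,-i_3,-i_2,-i_1).$$
Applying this rule termwise to the $32$-term expansion of $V_1(x)$ rewrites $\sigma(V_1(x))$ as an explicit combination of basis vectors, the coefficient of a vector $w$ in $\sigma(V_1(x))$ being the coefficient of $\sigma(w)$ in $V_1(x)$.

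With $V_2(y)$ and $a(x)\sigma(V_1(x))$ now expanded in the same basis, (\ref{ytox}) becomes $32$ scalar equations in the $16$ unknowns $a(x)$ and $y_m^{(l)}$, which I would solve by a triangular elimination steered by the simplest coefficients. The coefficient of $(+,-,-,-,-,+)$ in $V_2(y)$ equals $1$, while $\sigma(+,-,-,-,-,+)=(-,+,+,+,+,-)$ has coefficient $x_5^{(2)}x_5^{(1)}$ in $V_1(x)$; this one equation forces $a(x)=1/(x_5^{(2)}x_5^{(1)})$. The basis vectors whose $V_2$-coefficient is a single monomial then deliver several variables outright after multiplying the matching $V_1$-coefficient by $a(x)$: for instance $(-,-,-,-,-,-)\mapsto y_0^{(1)}=a(x)\,x_6^{(3)}x_6^{(2)}x_6^{(1)}$, $(+,-,-,+,+,+)\mapsto y_6^{(2)}=a(x)\,x_5^{(2)}=1/x_5^{(1)}$, and $(+,+,-,-,-,-)\mapsto y_2^{(2)}$. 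The variables that appear in the statement as an inverse of a sum, or as a product of a sum with the inverse of another sum, are recovered as ratios: picking two basis vectors whose $V_2$-coefficients are $P$ and $P\,y_m^{(l)}$ (up to already-known factors) isolates $y_m^{(l)}$. This is exactly the mechanism that produces, e.g., $y_5^{(1)}=\bigl(x_2^{(1)}/x_1^{(1)}+x_5^{(2)}/x_4^{(4)}+x_4^{(3)}/x_3^{(3)}+x_3^{(2)}/x_2^{(2)}\bigr)^{-1}$ from the coefficients of $(-,+,+,+,+,-)$ and $(+,-,+,+,+,-)$, which give $y_5^{(3)}y_5^{(2)}y_5^{(1)}=a(x)x_1^{(1)}$ and $y_5^{(3)}y_5^{(2)}=a(x)\bigl(x_2^{(1)}+x_1^{(1)}(x_5^{(2)}/x_4^{(4)}+x_4^{(3)}/x_3^{(3)}+x_3^{(2)}/x_2^{(2)})\bigr)$ respectively.

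Running this elimination over all fifteen $y_m^{(l)}$, in an order in which each variable is expressed through the $x$'s and previously-determined variables, reproduces the listed formulas; the ratios are manifestly subtraction-free, which is what one wants for the later positivity statement. The main obstacle is not any individual step but the consistency of the over-determined system: the number of unknowns ($16$) is half the number of coefficient equations ($32$), so after pinning down $a(x)$ and the $y_m^{(l)}$ one must verify that the candidate solution also satisfies the remaining equations. Conceptually this consistency is expected, since $\sigma$ intertwines the $\mathfrak{g}_0$-geometric-crystal cell $\mathcal{V}_1$ attached to $w_1\in W^0$ with the $\mathfrak{g}_1$-cell $\mathcal{V}_2$ attached to $w_2\in W^1$ (reflecting $t(\varpi_6)=\sigma w_1$ and $t(\check{\varpi_6})=\sigma w_2$), and a generic point of such a $B^-$-type cell has a unique factorization; but converting this into proof requires expanding and comparing all $32$ coefficients explicitly, which is the laborious core of the argument. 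A secondary, purely organizational difficulty is choosing, for each variable, the basis vector (or pair of basis vectors) whose coefficient isolates it, so that the elimination is genuinely triangular.
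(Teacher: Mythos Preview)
Your approach is correct and is exactly the method the paper indicates: the paper states just before the lemma that ``it can be solved uniquely by comparing the coefficients of the basis vectors of $W(\varpi_6)$'' and then simply records the answer without further detail. In fact you have supplied more than the paper does---the closed form $\sigma(i_1,\dots,i_6)=(-i_6,\dots,-i_1)$, the sample determinations of $a(x)$, $y_0^{(1)}$, $y_6^{(2)}$, $y_5^{(1)}$, and the explicit acknowledgement that the remaining equations must be checked for consistency---so nothing is missing.
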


Using Lemma \ref{yinx} we define the map
$$\bar{\sigma}: \mathcal{V}_1 \rightarrow \mathcal{V}_2,$$
$$V_1(x) \mapsto V_2(y).$$
Now we have the following result.

\begin{prop}\label{mapsigmabar} 
The map $\bar{\sigma}: \mathcal{V}_1 \rightarrow \mathcal{V}_2$ is a bi-positive birational isomorphism with the inverse positive rational map 
$$\bar{\sigma}^{-1}: \mathcal{V}_2 \rightarrow \mathcal{V}_1,$$
$$V_2(y) \mapsto V_1(x)$$
given by
\begin{align*} 
x_1^{(1)} 	&=\frac{y_5^{(3)}y_5^{(2)}y_5^{(1)}}{y_6^{(2)}y_6^{(1)}} , \qquad \qquad x_5^{(1)} =\frac{1}{y_6^{(2)}} , \qquad \qquad x_5^{(2)} =\frac{1}{y_6^{(1)}},\\
x_2^{(1)} 	&=\Big( \frac{y_6^{(2)}y_6^{(1)}}{y_5^{(3)}y_5^{(2)}}+\frac{y_6^{(2)}y_6^{(1)}y_5^{(1)}}{y_5^{(3)}y_4^{(2)}y_4^{(1)}}+\frac{y_6^{(2)}y_6^{(1)}  y_5^{(2)}y_5^{(1)}}{y_4^{(4)}y_4^{(3)}y_4^{(2)}y_4^{(1)}}\Big)^{-1}, \\
x_2^{(2)} 	&=\frac{y_4^{(4)}y_4^{(3)}y_4^{(2)}y_4^{(1)}}{(y_6^{(2)})^2(y_6^{(1)})^2}\Big(  \frac{y_6^{(2)}y_6^{(1)}}{y_5^{(3)}y_5^{(2)}}+\frac{y_6^{(2)}y_6^{(1)}y_5^{(1)}}{y_5^{(3)}y_4^{(2)}y_4^{(1)}}+\frac{y_6^{(2)}y_6^{(1)}  y_5^{(2)}y_5^{(1)}}{y_4^{(4)}y_4^{(3)}y_4^{(2)}y_4^{(1)}}\Big), \\
x_3^{(1)} 	&=\Big( \frac{y_6^{(2)}y_6^{(1)}}{y_5^{(3)}y_4^{(2)}}+\frac{y_6^{(2)}y_6^{(1)}y_5^{(2)}}{y_4^{(4)}y_4^{(3)}y_4^{(2)}}+\frac{y_6^{(2)}y_4^{(1)}}{y_5^{(3)}y_3^{(1)}}+\frac{y_6^{(2)}y_5^{(2)}y_4^{(1)}}{y_4^{(4)}y_4^{(3)}y_3^{(1)}}+\frac{y_6^{(2)}y_4^{(2)}y_4^{(1)}}{y_4^{(4)}y_3^{(2)}y_3^{(1)}} \\
		& \qquad +\frac{y_4^{(3)}y_4^{(2)}y_4^{(1)}}{y_3^{(3)}y_3^{(2)}y_3^{(1)}}\Big)^{-1},\\
x_3^{(2)} 	&= \frac{y_3^{(3)}y_3^{(2)}y_3^{(1)}}{(y_6^{(2)})^2(y_6^{(1)})^2} \Big(  \frac{y_6^{(2)}y_6^{(1)}}{y_5^{(3)}y_4^{(2)}}+\frac{y_6^{(2)}y_6^{(1)}y_5^{(2)}}{y_4^{(4)}y_4^{(3)}y_4^{(2)}}+\frac{y_6^{(2)}y_4^{(1)}}{y_5^{(3)}y_3^{(1)}}+\frac{y_6^{(2)}y_5^{(2)}y_4^{(1)}}{y_4^{(4)}y_4^{(3)}y_3^{(1)}}\\	
		& \qquad +\frac{y_6^{(2)}y_4^{(2)}y_4^{(1)}}{y_4^{(4)}y_3^{(2)}y_3^{(1)}}+\frac{y_4^{(3)}y_4^{(2)}y_4^{(1)}}{y_3^{(3)}y_3^{(2)}y_3^{(1)}} \Big)  \Big( \frac{y_3^{(3)}y_3^{(2)}y_3^{(1)}}{y_4^{(4)}y_4^{(3)}y_4^{(2)}}+\frac{y_4^{(1)}y_3^{(3)}y_3^{(2)}}{y_6^{(1)}y_4^{(4)}y_4^{(3)}}+\frac{y_4^{(2)}y_4^{(1)}y_3^{(3)}}{y_6^{(1)}y_5^{(2)}y_4^{(4)}}\\
		& \qquad+\frac{y_5^{(1)}y_3^{(3)}}{y_6^{(1)}y_4^{(4)}}+\frac{y_4^{(3)}y_4^{(2)}y_4^{(1)}}{y_6^{(2)}y_6^{(1)}y_5^{(2)}}+\frac{y_5^{(1)}y_4^{(3)}}{y_6^{(2)}y_6^{(1)}} \Big)^{-1}, \\
x_3^{(3)} 	&=  \frac{y_3^{(3)}y_3^{(2)}y_3^{(1)}}{y_4^{(4)}y_4^{(3)}y_4^{(2)}}+\frac{y_4^{(1)}y_3^{(3)}y_3^{(2)}}{y_6^{(1)}y_4^{(4)}y_4^{(3)}}+\frac{y_4^{(2)}y_4^{(1)}y_3^{(3)}}{y_6^{(1)}y_5^{(2)}y_4^{(4)}}+\frac{y_5^{(1)}y_3^{(3)}}{y_6^{(1)}y_4^{(4)}}+\frac{y_4^{(3)}y_4^{(2)}y_4^{(1)}}{y_6^{(2)}y_6^{(1)}y_5^{(2)}}+\frac{y_5^{(1)}y_4^{(3)}}{y_6^{(2)}y_6^{(1)}}, \\
x_4^{(1)} 	&= \Big( \frac{y_6^{(2)}}{y_5^{(3)}}+\frac{y_6^{(2)}y_5^{(2)}}{y_4^{(4)}y_4^{(3)}}+\frac{y_6^{(2)}y_4^{(2)}}{y_4^{(4)}y_3^{(2)}}+\frac{y_4^{(3)}y_4^{(2)}}{y_3^{(3)}y_3^{(2)}}+\frac{y_6^{(2)}y_3^{(1)}}{y_4^{(4)}y_2^{(1)}}+\frac{y_4^{(3)}y_3^{(1)}}{y_3^{(3)}y_2^{(1)}}+\frac{y_3^{(2)}y_3^{(1)}}{y_2^{(2)}y_2^{(1)}} \Big)^{-1}, \\
x_4^{(2)} 	&=  \frac{y_2^{(2)}y_2^{(1)}}{(y_6^{(2)})^2(y_6^{(1)})^2} \Big(  \frac{y_6^{(2)}}{y_5^{(3)}}+\frac{y_6^{(2)}y_5^{(2)}}{y_4^{(4)}y_4^{(3)}}+\frac{y_6^{(2)}y_4^{(2)}}{y_4^{(4)}y_3^{(2)}}+\frac{y_4^{(3)}y_4^{(2)}}{y_3^{(3)}y_3^{(2)}}+\frac{y_6^{(2)}y_3^{(1)}}{y_4^{(4)}y_2^{(1)}}+\frac{y_4^{(3)}y_3^{(1)}}{y_3^{(3)}y_2^{(1)}} \\ 
		& \qquad +\frac{y_3^{(2)}y_3^{(1)}}{y_2^{(2)}y_2^{(1)}} \Big) \Big( \frac{y_2^{(2)}y_2^{(1)}}{y_6^{(1)}y_4^{(4)}y_3^{(2)}}+\frac{y_4^{(3)}y_2^{(2)}y_2^{(1)}}{y_6^{(2)}y_6^{(1)}y_3^{(3)}y_3^{(2)}}+\frac{y_3^{(1)}y_2^{(2)}}{y_6^{(1)}y_4^{(4)}y_4^{(2)}}\\
		& \qquad +\frac{y_4^{(3)}y_3^{(1)}y_2^{(2)}}{y_6^{(2)}y_6^{(1)}y_4^{(2)}y_3^{(3)}}+\frac{y_4^{(1)}y_2^{(2)}}{(y_6^{(1)})^2 y_4^{(4)}} +\frac{y_4^{(3)}y_4^{(1)}y_2^{(2)}}{y_6^{(2)} (y_6^{(1)})^2 y_3^{(3)}}+\frac{y_3^{(2)}y_3^{(1)}}{y_6^{(2)}y_6^{(1)}y_4^{(2)}} \\
		& \qquad +\frac{y_4^{(1)}y_3^{(2)}}{y_6^{(2)} (y_6^{(1)})^2} \Big)^{-1},\\
x_4^{(3)} 	&= \Big( \frac{y_2^{(2)}y_2^{(1)}}{y_6^{(1)}y_4^{(4)}y_3^{(2)}}+\frac{y_4^{(3)}y_2^{(2)}y_2^{(1)}}{y_6^{(2)}y_6^{(1)}y_3^{(3)}y_3^{(2)}}+\frac{y_3^{(1)}y_2^{(2)}}{y_6^{(1)}y_4^{(4)}y_4^{(2)}}+\frac{y_4^{(3)}y_3^{(1)}y_2^{(2)}}{y_6^{(2)}y_6^{(1)}y_4^{(2)}y_3^{(3)}}\\
		& \qquad +\frac{y_4^{(1)}y_2^{(2)}}{(y_6^{(1)})^2 y_4^{(4)}} +\frac{y_4^{(3)}y_4^{(1)}y_2^{(2)}}{y_6^{(2)} (y_6^{(1)})^2 y_3^{(3)}}+\frac{y_3^{(2)}y_3^{(1)}}{y_6^{(2)}y_6^{(1)}y_4^{(2)}}+\frac{y_4^{(1)}y_3^{(2)}}{y_6^{(2)} (y_6^{(1)})^2} \Big) \Big(\frac{y_2^{(2)}y_2^{(1)}}{y_3^{(3)}y_3^{(2)}}\\
		& \qquad +\frac{y_3^{(1)}y_2^{(2)}}{y_4^{(2)}y_3^{(3)}}+\frac{y_4^{(1)}y_2^{(2)}}{y_6^{(1)}y_3^{(3)}}+\frac{y_3^{(2)}y_3^{(1)}}{y_4^{(3)}y_4^{(2)}}+\frac{y_4^{(1)}y_3^{(2)}}{y_6^{(1)}y_4^{(3)}}+\frac{y_4^{(2)}y_4^{(1)}}{y_6^{(1)}y_5^{(2)}}+\frac{y_5^{(1)}}{y_6^{(1)}}\Big)^{-1}, \\
x_4^{(4)} 	&= \frac{y_2^{(2)}y_2^{(1)}}{y_3^{(3)}y_3^{(2)}}+\frac{y_3^{(1)}y_2^{(2)}}{y_4^{(2)}y_3^{(3)}}+\frac{y_4^{(1)}y_2^{(2)}}{y_6^{(1)}y_3^{(3)}}+\frac{y_3^{(2)}y_3^{(1)}}{y_4^{(3)}y_4^{(2)}}+\frac{y_4^{(1)}y_3^{(2)}}{y_6^{(1)}y_4^{(3)}}+\frac{y_4^{(2)}y_4^{(1)}}{y_6^{(1)}y_5^{(2)}}+\frac{y_5^{(1)}}{y_6^{(1)}}, \\
x_6^{(1)} 	&= \Big( \frac{y_6^{(2)}}{y_4^{(4)}}+\frac{y_4^{(3)}}{y_3^{(3)}}+\frac{y_3^{(2)}}{y_2^{(2)}}+\frac{y_2^{(1)}}{y_0^{(1)}} \Big)^{-1}, \\
x_6^{(2)} 	&=  \frac{y_0^{(1)}}{y_6^{(2)}y_6^{(1)}}\Big( \frac{y_6^{(2)}}{y_4^{(4)}}+\frac{y_4^{(3)}}{y_3^{(3)}}+\frac{y_3^{(2)}}{y_2^{(2)}}+\frac{y_2^{(1)}}{y_0^{(1)}}  \Big) \Big( \frac{y_0^{(1)}}{y_2^{(2)}}+\frac{y_2^{(1)}}{y_3^{(2)}}+\frac{y_3^{(1)}}{y_4^{(2)}}+\frac{y_4^{(1)}}{y_6^{(1)}}\Big)^{-1}, \\
x_6^{(3)} 	&=  \frac{y_0^{(1)}}{y_2^{(2)}}+\frac{y_2^{(1)}}{y_3^{(2)}}+\frac{y_3^{(1)}}{y_4^{(2)}}+\frac{y_4^{(1)}}{y_6^{(1)}}.
\end{align*}
\end{prop}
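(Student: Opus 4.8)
The plan is to establish the two separate assertions packaged into the statement: first that both $\bar\sigma$ and the candidate inverse are positive rational maps (bi-positivity), and second that the displayed formulas for $x_m^{(l)}$ genuinely invert the formulas of Lemma \ref{yinx} (birationality). Positivity is the easy part and I would dispose of it first by direct inspection. Every expression for $y_m^{(l)}$ in Lemma \ref{yinx}, and every displayed expression for $x_m^{(l)}$, is manifestly subtraction-free: each is a ratio of sums of Laurent monomials all of whose coefficients equal $+1$. Hence $\bar\sigma$ and the candidate inverse are positive, and once the inverse relation is confirmed, $\bar\sigma$ is automatically bi-positive.

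For birationality I would exploit that $\sigma$ is an involution rather than attempt a head-on substitution. Applying $\sigma$ to the defining relation \eqref{ytox} and using $\sigma^2 = \mathrm{id}$ gives $\sigma(V_2(y)) = a(x)\,\sigma^2(V_1(x)) = a(x)\,V_1(x)$, that is,
\[
V_1(x) = \tfrac{1}{a(x)}\,\sigma\big(V_2(y)\big).
\]
This is an equation of exactly the same shape as \eqref{ytox}, but with the roles of $\mathcal{V}_1$ and $\mathcal{V}_2$ (and of $x$ and $y$) interchanged and with the scalar $a(x)$ replaced by its reciprocal. Comparing leading terms confirms the reciprocal and fixes the normalization: since $\sigma$ sends the highest-weight term $y_5^{(3)}y_5^{(2)}y_5^{(1)}(-,+,+,+,+,-)$ of $V_2(y)$ to the term supported on $(+,-,-,-,-,+)$ in $V_1(x)$, matching coefficients forces the new scalar $b(y)=1/a(x)$ to equal $1/(y_6^{(1)}y_6^{(2)})$, which is consistent with the stated values $x_5^{(1)}=1/y_6^{(2)}$, $x_5^{(2)}=1/y_6^{(1)}$ and with $x_1^{(1)}=b(y)\,y_5^{(3)}y_5^{(2)}y_5^{(1)}$.

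Because the reversed equation has the identical form, the uniqueness argument used to solve \eqref{ytox} in Lemma \ref{yinx} — comparing the coefficients of the $32$ basis vectors of $W(\varpi_6)$ and solving the resulting over-determined but consistent system — applies verbatim after interchanging the two charts. Running that procedure yields a unique subtraction-free value for each coordinate $x_m^{(l)}$, and one checks that these are precisely the expressions displayed in the proposition. It follows that the stated map is the unique two-sided inverse of $\bar\sigma$, so $\bar\sigma$ is a birational isomorphism; combined with the positivity of both directions it is bi-positive, as claimed.

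The main obstacle I anticipate is the bookkeeping in this last step: one must verify that, under $\sigma$, the data $(w_2,\check{\varpi_6},(-,+,+,+,+,-))$ defining $\mathcal{V}_2$ plays exactly the structural role that $(w_1,\varpi_6,(+,+,+,+,+,+))$ plays for $\mathcal{V}_1$, so that ``the same procedure'' legitimately reproduces the displayed formulas and not merely some inverse of the correct shape. Equivalently, for a self-contained check one may substitute the $y_m^{(l)}(x)$ of Lemma \ref{yinx} into the displayed $x_m^{(l)}(y)$ and verify the composite collapses to the identity; here the difficulty is purely the cancellation, driven by the telescoping of the partial-sum denominators (the same $\veps_k$-type sums appearing in Theorem \ref{schubert}), which is what makes the simplification go through.
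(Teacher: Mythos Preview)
Your proposal is correct and, at its core, reduces to the same thing the paper does: the paper's entire proof is the sentence ``The fact that $\bar{\sigma}$ is a bi-positive birational map follows from the explicit formulas. The rest follows by direct calculations.'' Your observation that positivity is manifest (all expressions are subtraction-free ratios of monomial sums) and that inverting amounts to re-solving the coefficient equations or directly composing and simplifying is exactly this.

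Your additional conceptual layer---applying $\sigma$ to \eqref{ytox} and using $\sigma^2=\mathrm{id}$ to obtain the reversed equation $V_1(x)=a(x)^{-1}\sigma(V_2(y))$---is a genuine improvement over the paper's bare ``direct calculations'', since it explains \emph{a priori} why an inverse with the same positive shape must exist. One refinement: the reason the displayed inverse formulas are obtained from those of Lemma \ref{yinx} by a simple relabeling of indices is not $\sigma$ itself but the auxiliary Dynkin automorphism $\tau=(0\,1)(5\,6)$, which sends the word $(6,4,3,2,5,4,3,6,4,5,1,2,3,4,6)$ defining $\mathcal V_1$ to the word $(5,4,3,2,6,4,3,5,4,6,0,2,3,4,5)$ defining $\mathcal V_2$ and maps $(+,+,+,+,+,+)$ to $(-,+,+,+,+,-)$. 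Combining $\sigma$ (to flip the equation) with $\tau$ (to identify the two charts) is what makes ``the same procedure'' literally reproduce the displayed formulas rather than merely some inverse; your acknowledged ``bookkeeping obstacle'' dissolves once you invoke $\tau$. Either way, your fallback of direct substitution is exactly the paper's route and suffices.
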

\begin{proof} The fact that $\bar{\sigma}$ is a bi-positive birational map follows from the explicit formulas. The rest follows by direct calculations.
\end{proof}
It is known that $\cV_1$ (resp. $\cV_2$) has the structure of a $\ge_0$ (resp. $\ge_1$) positive geometric crystal (\cite{BK}, \cite{N}, \cite{KNO}). Indeed, note that taking the sesquence  ${\bf i} = (6,4,3,2,5,4,3,6,4,5,1,2,3,4,6)$ the explicit actions of \ $e_k^c, \ \gamma_k, \ \veps_k$ on $V_1(x)$ for $k=1,2,3,4,5,6$ are given by Theorem \ref{schubert}  as follows.

\begin{align*}
e_k^c(V_1(x)) &=
\begin{cases}
V_1(x_6^{(3)},x_4^{(4)}, x_3^{(3)}, x_2^{(2)}, x_5^{(2)}, x_4^{(3)}, x_3^{(2)}, x_6^{(2)}, x_4^{(2)}, x_5^{(1)},cx_1^{(1)}, x_2^{(1)}, x_3^{(1)},\\
	\quad  x_4^{(1)}, x_6^{(1)}), \  k=1,\\
V_1(x_6^{(3)},x_4^{(4)}, x_3^{(3)}, c_2 x_2^{(2)}, x_5^{(2)}, x_4^{(3)}, x_3^{(2)}, x_6^{(2)}, x_4^{(2)}, x_5^{(1)},x_1^{(1)}, \frac{c}{c_2} x_2^{(1)}, \\
	\quad x_3^{(1)}, x_4^{(1)}, x_6^{(1)}), \ k=2,\\
V_1(x_6^{(3)},x_4^{(4)}, c_{3_1} x_3^{(3)}, x_2^{(2)}, x_5^{(2)}, x_4^{(3)}, c_{3_2} x_3^{(2)}, x_6^{(2)}, x_4^{(2)}, x_5^{(1)},x_1^{(1)}, x_2^{(1)}, \\
	\quad  \frac{c}{c_{3_1} c_{3_2}} x_3^{(1)}, x_4^{(1)}, x_6^{(1)}), \ k=3,\\
V_1(x_6^{(3)}, c_{4_1} x_4^{(4)}, x_3^{(3)}, x_2^{(2)}, x_5^{(2)}, c_{4_2} x_4^{(3)}, x_3^{(2)}, x_6^{(2)}, c_{4_3} x_4^{(2)}, x_5^{(1)},x_1^{(1)}, \\
	\quad x_2^{(1)}, x_3^{(1)}, \frac{c}{c_{4_1} c_{4_2} c_{4_3}} x_4^{(1)}, x_6^{(1)}), \ k=4,\\
V_1(x_6^{(3)},x_4^{(4)}, x_3^{(3)}, x_2^{(2)},c_5 x_5^{(2)}, x_4^{(3)}, x_3^{(2)}, x_6^{(2)}, x_4^{(2)},  \frac{c}{c_5} x_5^{(1)},x_1^{(1)}, x_2^{(1)}, \\
	\quad x_3^{(1)}, x_4^{(1)}, x_6^{(1)}), \ k=5,\\
V_1(c_{6_1}x_6^{(3)},x_4^{(4)}, x_3^{(3)}, x_2^{(2)}, x_5^{(2)}, x_4^{(3)}, x_3^{(2)}, c_{6_2} x_6^{(2)}, x_4^{(2)},   x_5^{(1)},x_1^{(1)}, x_2^{(1)}, \\
	\quad x_3^{(1)}, x_4^{(1)}, \frac{c}{c_{6_1} c_{6_2}} x_6^{(1)}), \ k=6,\\
\end{cases} \\
\text{where} & \\
&c_2 =\frac{cx_2^{(2)}x_2^{(1)}+x_3^{(2)}x_1^{(1)}}{x_2^{(2)}x_2^{(1)}+x_3^{(2)}x_1^{(1)}}, \\
&c_{3_1} = \frac{cx_3^{(3)}(x_3^{(2)})^2 x_3^{(1)} + x_4^{(3)}x_3^{(2)}x_3^{(1)}x_2^{(2)} + x_4^{(3)}x_4^{(2)}x_2^{(2)}x_2^{(1)}}{x_3^{(3)}(x_3^{(2)})^2 x_3^{(1)} + x_4^{(3)}x_3^{(2)}x_3^{(1)}x_2^{(2)} + x_4^{(3)}x_4^{(2)}x_2^{(2)}x_2^{(1)}}, \\
&c_{3_2} = \frac{cx_3^{(3)}(x_3^{(2)})^2 x_3^{(1)} + c x_4^{(3)}x_3^{(2)}x_3^{(1)}x_2^{(2)} + x_4^{(3)}x_4^{(2)}x_2^{(2)}x_2^{(1)}}{cx_3^{(3)}(x_3^{(2)})^2 x_3^{(1)} + x_4^{(3)}x_3^{(2)}x_3^{(1)}x_2^{(2)} + x_4^{(3)}x_4^{(2)}x_2^{(2)}x_2^{(1)}}, \\
&c_{4_1} = \frac{cc_4^1 + c_4^2 + c_4^3 + c_4^4}{c_4^1 + c_4^2 + c_4^3 + c_4^4}, \\
&c_{4_2} = \frac{cc_4^1 + cc_4^2 + c_4^3 + c_4^4}{cc_4^1 + c_4^2 + c_4^3 + c_4^4}, \\
&c_{4_3} = \frac{cc_4^1 + cc_4^2 + cc_4^3 + c_4^4}{cc_4^1 + cc_4^2 + c_4^3 + c_4^4}, \\
& c_4^1 = x_4^{(4)}(x_4^{(3)})^2(x_4^{(2)})^2 x_4^{(1)},\\
& c_4^2 = x_5^{(2)} x_4^{(3)}(x_4^{(2)})^2 x_4^{(1)}x_3^{(3)},\\
& c_4^3 = x_6^{(2)}x_5^{(2)}x_4^{(2)}x_4^{(1)}x_3^{(3)}x_3^{(2)},\\
& c_4^4 = x_6^{(2)}x_5^{(2)}x_5^{(1)}x_3^{(3)}x_3^{(2)}x_3^{(1)},\\
&c_5 = \frac{cx_5^{(2)}x_5^{(1)}+x_4^{(3)}x_4^{(2)}}{x_5^{(2)}x_5^{(1)}+x_4^{(3)}x_4^{(2)}}, \\
&c_{6_1} = \frac{cx_6^{(3)}(x_6^{(2)})^2 x_6^{(1)} + x_6^{(2)}x_6^{(1)}x_4^{(4)}x_4^{(3)} + x_4^{(4)}x_4^{(3)}x_4^{(2)}x_4^{(1)}}{x_6^{(3)}(x_6^{(2)})^2 x_6^{(1)} + x_6^{(2)}x_6^{(1)}x_4^{(4)}x_4^{(3)} + x_4^{(4)}x_4^{(3)}x_4^{(2)}x_4^{(1)}}, \\
&c_{6_2} = \frac{cx_6^{(3)}(x_6^{(2)})^2 x_6^{(1)} + c x_6^{(2)}x_6^{(1)}x_4^{(4)}x_4^{(3)} + x_4^{(4)}x_4^{(3)}x_4^{(2)}x_4^{(1)}}{c x_6^{(3)}(x_6^{(2)})^2 x_6^{(1)} + x_6^{(2)}x_6^{(1)}x_4^{(4)}x_4^{(3)} + x_4^{(4)}x_4^{(3)}x_4^{(2)}x_4^{(1)}}. \\
\gamma_k(V_1(x)) &= 
\begin{cases}
\frac{(x_1^{(1)})^2}{x_2^{(2)}x_2^{(1)}}, &k=1,\\
\frac{(x_2^{(2)})^2 (x_2^{(1)})^2}{x_3^{(3)}x_3^{(2)}x_3^{(1)}x_1^{(1)}}, &k=2, \\ 
\frac{(x_3^{(3)})^2 (x_3^{(2)})^2 (x_3^{(1)})^2}{x_4^{(4)}x_4^{(3)}x_4^{(2)}x_4^{(1)}x_2^{(2)}x_2^{(1)}}, &k=3, \\ 
\frac{(x_4^{(4)})^2 (x_4^{(3)})^2 (x_4^{(2)})^2 (x_4^{(1)})^2}{x_6^{(3)}x_6^{(2)}x_6^{(1)}x_5^{(2)}x_5^{(1)}x_3^{(3)}x_3^{(2)}x_3^{(1)}}, &k=4, \\ 
\frac{(x_5^{(2)})^2 (x_5^{(1)})^2}{x_4^{(4)}x_4^{(3)}x_4^{(2)}x_4^{(1)}}, &k=5, \\ 
\frac{(x_6^{(3)})^2 (x_6^{(2)})^2 (x_6^{(1)})^2}{x_4^{(4)}x_4^{(3)}x_4^{(2)}x_4^{(1)}}, &k=6. \\ 
\end{cases}\\
\veps_k(V_1(x)) &= 
\begin{cases}
\frac{x_2^{(2)}}{x_1^{(1)}}, &k=1, \\ 
\frac{x_3^{(3)}}{x_2^{(2)}} \big( 1 + \frac{x_3^{(2)}x_1^{(1)}}{x_2^{(2)}x_2^{(1)}} \big), &k=2, \\ 
\frac{x_4^{(4)}}{x_3^{(3)}} \big( 1 + \frac{x_4^{(3)}x_2^{(2)}}{x_3^{(3)}x_3^{(2)}}+ \frac{x_4^{(3)}x_4^{(2)}x_2^{(2)}x_2^{(1)}}{x_3^{(3)}(x_3^{(2)})^2 x_3^{(1)}} \big), &k=3, \\ 
\frac{x_6^{(3)}}{x_4^{(4)}} \big( 1 + \frac{x_5^{(2)}x_3^{(3)}}{x_4^{(4)}x_4^{(3)}}+ \frac{x_6^{(2)}x_5^{(2)}x_3^{(3)}x_3^{(2)}}{x_4^{(4)}(x_4^{(3)})^2 x_4^{(2)}} + \frac{x_6^{(2)}x_5^{(2)}x_5^{(1)}x_3^{(3)}x_3^{(2)}x_3^{(1)}}{x_4^{(4)}(x_4^{(3)})^2 (x_4^{(2)})^2 x_4^{(1)}} \big), &k=4, \\ 
\frac{x_4^{(4)}}{x_5^{(2)}} \big( 1 + \frac{x_4^{(3)}x_4^{(2)}}{x_5^{(2)}x_5^{(1)}} \big), &k=5, \\ 
\frac{1}{x_6^{(3)}} \big( 1 + \frac{x_4^{(4)}x_4^{(3)}}{x_6^{(3)}x_6^{(2)}}+ \frac{x_4^{(4)}x_4^{(3)}x_4^{(2)}x_4^{(1)}}{x_6^{(3)}(x_6^{(2)})^2 x_6^{(1)}} \big), &k=6. \\ 
\end{cases} 
\end{align*}

By choosing ${\bf i} = (5,4,3,2,6,4,3,5,4,6,0,2,3,4,5)$, we also have the following explicit actions of \ $\bar{e_k}^c,  \ \bar{\gamma}_k, \ \bar{\veps}_k, \  k=0,2,3,4,5,6$ on $V_2(y)$  by Theorem \ref{schubert}.
\begin{align*}
\bar{e_k}^c(V_2(y)) &=
\begin{cases}
V_2(y_5^{(3)},y_4^{(4)}, y_3^{(3)}, y_2^{(2)}, y_6^{(2)}, y_4^{(3)}, y_3^{(2)}, y_5^{(2)}, y_4^{(2)}, y_6^{(1)}, c y_0^{(1)},y_2^{(1)},y_3^{(1)},\\
	\quad y_4^{(1)},y_5^{(1)}),  \ k=0,\\
V_2(y_5^{(3)},y_4^{(4)}, y_3^{(3)},  \bar{c}_2 y_2^{(2)}, y_6^{(2)}, y_4^{(3)}, y_3^{(2)}, y_5^{(2)}, y_4^{(2)}, y_6^{(1)}, y_0^{(1)}, \frac{c}{\bar{c}_2}y_2^{(1)},\\
	\quad y_3^{(1)},y_4^{(1)},y_5^{(1)}), \ k=2,\\
V_2(y_5^{(3)},y_4^{(4)}, \bar{c}_{3_1} y_3^{(3)}, y_2^{(2)}, y_6^{(2)}, y_4^{(3)},  \bar{c}_{3_2} y_3^{(2)}, y_5^{(2)}, y_4^{(2)}, y_6^{(1)}, y_0^{(1)},y_2^{(1)},\\
	\quad  \frac{c}{\bar{c}_{3_1} \bar{c}_{3_2}}y_3^{(1)},y_4^{(1)},y_5^{(1)}), \ k=3,\\
V_2(y_5^{(3)},\bar{c}_{4_1} y_4^{(4)},  y_3^{(3)}, y_2^{(2)}, y_6^{(2)}, \bar{c}_{4_2} y_4^{(3)}, y_3^{(2)}, y_5^{(2)}, \bar{c}_{4_3} y_4^{(2)}, y_6^{(1)}, y_0^{(1)},\\
	\quad y_2^{(1)},y_3^{(1)}, \frac{c}{\bar{c}_{4_1} \bar{c}_{4_2} \bar{c}_{4_3}} y_4^{(1)},y_5^{(1)}), \ k=4,\\
V_2(\bar{c}_{5_1} y_5^{(3)},y_4^{(4)}, y_3^{(3)}, y_2^{(2)}, y_6^{(2)}, y_4^{(3)}, y_3^{(2)}, \bar{c}_{5_2} y_5^{(2)}, y_4^{(2)}, y_6^{(1)}, y_0^{(1)},y_2^{(1)},\\
	\quad y_3^{(1)},y_4^{(1)}, \frac{c}{\bar{c}_{5_1} \bar{c}_{5_2}}y_5^{(1)}), \ k=5,\\
V_2(y_5^{(3)},y_4^{(4)}, y_3^{(3)}, y_2^{(2)}, \bar{c}_6 y_6^{(2)}, y_4^{(3)}, y_3^{(2)}, y_5^{(2)}, y_4^{(2)},  \frac{c}{\bar{c}_6} y_6^{(1)}, y_0^{(1)},y_2^{(1)},\\
	\quad y_3^{(1)},y_4^{(1)},y_5^{(1)}), \ k=6,\\
\end{cases} \\
\text{where} & \\
&\bar{c}_2 =\frac{cy_2^{(2)}y_2^{(1)}+y_3^{(2)}y_0^{(1)}}{y_2^{(2)}y_2^{(1)}+y_3^{(2)}y_0^{(1)}}, \\
&\bar{c}_{3_1} = \frac{cy_3^{(3)}(y_3^{(2)})^2 y_3^{(1)} + y_4^{(3)}y_3^{(2)}y_3^{(1)}y_2^{(2)} + y_4^{(3)}y_4^{(2)}y_2^{(2)}y_2^{(1)}}{y_3^{(3)}(y_3^{(2)})^2 y_3^{(1)} + y_4^{(3)}y_3^{(2)}y_3^{(1)}y_2^{(2)} + y_4^{(3)}y_4^{(2)}y_2^{(2)}y_2^{(1)}}, \\
&\bar{c}_{3_2} = \frac{cy_3^{(3)}(y_3^{(2)})^2 y_3^{(1)} + cy_4^{(3)}y_3^{(2)}y_3^{(1)}y_2^{(2)} + y_4^{(3)}y_4^{(2)}y_2^{(2)}y_2^{(1)}}{cy_3^{(3)}(y_3^{(2)})^2 y_3^{(1)} + y_4^{(3)}y_3^{(2)}y_3^{(1)}y_2^{(2)} + y_4^{(3)}y_4^{(2)}y_2^{(2)}y_2^{(1)}}, \\
&\bar{c}_{4_1} = \frac{c\bar{c}_4^1 +  \bar{c}_4^2 +  \bar{c}_4^3+ \bar{c}_4^4}{\bar{c}_4^1 +  \bar{c}_4^2 +  \bar{c}_4^3+ \bar{c}_4^4}, \\
&\bar{c}_{4_2} = \frac{c\bar{c}_4^1 + c \bar{c}_4^2 +  \bar{c}_4^3+ \bar{c}_4^4}{c\bar{c}_4^1 +  \bar{c}_4^2 +  \bar{c}_4^3+ \bar{c}_4^4}, \\
&\bar{c}_{4_3} = \frac{c\bar{c}_4^1 + c \bar{c}_4^2 + c \bar{c}_4^3+ \bar{c}_4^4}{c\bar{c}_4^1 + c \bar{c}_4^2 +  \bar{c}_4^3+ \bar{c}_4^4}, \\
&\bar{c}_4^1 = y_4^{(4)}(y_4^{(3)})^2(y_4^{(2)})^2 y_4^{(1)},\\
&\bar{c}_4^2 = y_6^{(2)} y_4^{(3)}(y_4^{(2)})^2 y_4^{(1)}y_3^{(3)} ,\\
&\bar{c}_4^3 = y_6^{(2)}y_5^{(2)}y_4^{(2)}y_4^{(1)}y_3^{(3)}y_3^{(2)}, \\
&\bar{c}_4^4 = y_6^{(2)}y_6^{(1)}y_5^{(2)}y_3^{(3)}y_3^{(2)}y_3^{(1)},\\
&\bar{c}_{5_1} = \frac{c y_5^{(3)}(y_5^{(2)})^2 y_5^{(1)} + y_5^{(2)}y_5^{(1)}y_4^{(4)}y_4^{(3)} + y_4^{(4)}y_4^{(3)}y_4^{(2)}y_4^{(1)}}{ y_5^{(3)}(y_5^{(2)})^2 y_5^{(1)} + y_5^{(2)}y_5^{(1)}y_4^{(4)}y_4^{(3)} + y_4^{(4)}y_4^{(3)}y_4^{(2)}y_4^{(1)}}, \\
&\bar{c}_{5_2} = \frac{c y_5^{(3)}(y_5^{(2)})^2 y_5^{(1)} + c y_5^{(2)}y_5^{(1)}y_4^{(4)}y_4^{(3)} + y_4^{(4)}y_4^{(3)}y_4^{(2)}y_4^{(1)}}{ c y_5^{(3)}(y_5^{(2)})^2 y_5^{(1)} + y_5^{(2)}y_5^{(1)}y_4^{(4)}y_4^{(3)} + y_4^{(4)}y_4^{(3)}y_4^{(2)}y_4^{(1)}}, \\
&\bar{c}_6 = \frac{cy_6^{(2)}y_6^{(1)}+y_4^{(3)}y_4^{(2)}}{y_6^{(2)}y_6^{(1)}+y_4^{(3)}y_4^{(2)}}. 
\end{align*}

\begin{align*}
\bar{\gamma}_k(V_2(y)) &= 
\begin{cases}
\frac{(y_0^{(1)})^2}{y_2^{(2)}y_2^{(1)}}, &k=0,\\
\frac{(y_2^{(2)})^2 (y_2^{(1)})^2}{y_3^{(3)}y_3^{(2)}y_3^{(1)}y_0^{(1)}}, &k=2, \\ 
\frac{(y_3^{(3)})^2 (y_3^{(2)})^2 (y_3^{(1)})^2}{y_4^{(4)}y_4^{(3)}y_4^{(2)}y_4^{(1)}y_2^{(2)}y_2^{(1)}}, &k=3, \\ 
\frac{(y_4^{(4)})^2 (y_4^{(3)})^2 (y_4^{(2)})^2 (y_4^{(1)})^2}{y_6^{(2)}y_6^{(1)}y_5^{(3)}y_5^{(2)}y_5^{(1)}y_3^{(3)}y_3^{(2)}y_3^{(1)}}, &k=4, \\ 
\frac{(y_5^{(3)})^2 (y_5^{(2)})^2 (y_5^{(1)})^2}{y_4^{(4)} y_4^{(3)} y_4^{(2)}y_4^{(1)}}, &k=5, \\ 
\frac{(y_6^{(2)})^2 (y_6^{(1)})^2}{y_4^{(4)} y_4^{(3)}y_4^{(2)}y_4^{(1)}}, &k=6. \\ 
\end{cases}\\
\bar{\veps}_k(V_2(y)) &= 
\begin{cases}
\frac{y_2^{(2)}}{y_0^{(1)}}, &k=0, \\ 
\frac{y_3^{(3)}}{y_2^{(2)}} \big( 1 + \frac{y_3^{(2)}y_0^{(1)}}{y_2^{(2)}y_2^{(1)}} \big), &k=2, \\ 
\frac{y_4^{(4)}}{y_3^{(3)}} \big( 1 + \frac{y_4^{(3)}y_2^{(2)}}{y_3^{(3)}y_3^{(2)}}+ \frac{y_4^{(3)}y_4^{(2)}y_2^{(2)}y_2^{(1)}}{y_3^{(3)}(y_3^{(2)})^2 y_3^{(1)}} \big), &k=3, \\ 
\frac{y_5^{(3)}}{y_4^{(4)}} \big( 1 + \frac{y_6^{(2)}y_3^{(3)}}{y_4^{(4)}y_4^{(3)}}+ \frac{y_6^{(2)}y_5^{(2)}y_3^{(3)}y_3^{(2)}}{y_4^{(4)}(y_4^{(3)})^2 y_4^{(2)}} + \frac{y_6^{(2)}y_6^{(1)}y_5^{(2)}y_3^{(3)}y_3^{(2)}y_3^{(1)}}{y_4^{(4)}(y_4^{(3)})^2 (y_4^{(2)})^2 y_4^{(1)}} \big), &k=4, \\ 
\frac{1}{y_5^{(3)}} \big( 1 + \frac{y_4^{(4)}y_4^{(3)}}{y_5^{(3)}y_5^{(2)}}+ \frac{y_4^{(4)}y_4^{(3)}y_4^{(2)}y_4^{(1)}}{y_5^{(3)}(y_5^{(2)})^2 y_5^{(1)}} \big), &k=5, \\ 
\frac{y_4^{(4)}}{y_6^{(2)}} \big( 1 + \frac{y_4^{(3)}y_4^{(2)}}{y_6^{(2)}y_6^{(1)}} \big), &k=6. \\ 
\end{cases} 
\end{align*}

\begin{prop}\label{relation2} The relation $\bar{\sigma} e_2^c = \bar{e_4}^c \bar{\sigma}$ holds.
\begin{proof} Set $e_2^c (V_1(x)) = V_1(z)$, $\bar{\sigma}(V_1(z)) = V_2(y')$,  $\bar{\sigma} (V_1(x)) = V_2(y)$ and $\bar{e_4}^c (V_2(y)) = V_2(w)$. We need to show that $y_m^{(l)'} = w_m^{(l)}$ for $(l,m) \in \{(1,0), (1,2), (1,3), (1,4), (1,5), \\ (1,6), (2,2), (2,3), (2,4), (2,5), (2,6), (3,3), (3,4), (3,5), (4,4)\}$. Let us check this equality for $l=1$ and the rest can be verified similarly. 
\begin{itemize}
\item $y_0^{(1)'} = \frac{z_6^{(3)}z_6^{(2)}z_6^{(1)}}{z_5^{(2)}z_5^{(1)}} =\frac{x_6^{(3)}x_6^{(2)}x_6^{(1)}}{x_5^{(2)}x_5^{(1)}} = y_0^{(1)} = w_0^{(1)}.$
\item $y_2^{(1)'} = \Big( \frac{z_5^{(2)}z_5^{(1)}}{z_6^{(3)}z_6^{(2)}}+\frac{z_6^{(1)}z_5^{(2)}z_5^{(1)}}{z_6^{(3)}z_4^{(2)}z_4^{(1)}}+\frac{z_6^{(2)}					z_6^{(1)}  z_5^{(2)}z_5^{(1)}}{z_4^{(4)}z_4^{(3)}z_4^{(2)}z_4^{(1)}}\Big)^{-1} \\
			= \Big( \frac{x_5^{(2)}x_5^{(1)}}{x_6^{(3)}x_6^{(2)}}+\frac{x_6^{(1)}x_5^{(2)}x_5^{(1)}}{x_6^{(3)}x_4^{(2)}x_4^{(1)}}+\frac{x_6^{(2)}					x_6^{(1)}  x_5^{(2)}x_5^{(1)}}{x_4^{(4)}x_4^{(3)}x_4^{(2)}x_4^{(1)}}\Big)^{-1} = y_2^{(1)} = w_2^{(1)}$.
\item $y_3^{(1)'} = \Big( \frac{z_6^{(2)}z_6^{(1)}}{z_5^{(3)}z_4^{(2)}}+\frac{z_6^{(2)}z_6^{(1)}z_5^{(2)}}{z_4^{(4)}z_4^{(3)}z_4^{(2)}}+\frac{z_6^{(2)}					z_4^{(1)}}{z_5^{(3)}z_3^{(1)}}+\frac{z_6^{(2)}z_5^{(2)}z_4^{(1)}}{z_4^{(4)}z_4^{(3)}z_3^{(1)}}+\frac{z_6^{(2)}z_4^{(2)}z_4^{(1)}}					{z_4^{(4)}z_3^{(2)}z_3^{(1)}}+\frac{z_4^{(3)}z_4^{(2)}z_4^{(1)}}{z_3^{(3)}z_3^{(2)}z_3^{(1)}}\Big)^{-1} \\
			=\Big( \frac{y_6^{(2)}y_6^{(1)}}{y_5^{(3)}y_4^{(2)}}+\frac{y_6^{(2)}y_6^{(1)}y_5^{(2)}}{y_4^{(4)}y_4^{(3)}y_4^{(2)}}+\frac{y_6^{(2)}					y_4^{(1)}}{y_5^{(3)}y_3^{(1)}}+\frac{y_6^{(2)}y_5^{(2)}y_4^{(1)}}{y_4^{(4)}y_4^{(3)}y_3^{(1)}}+\frac{y_6^{(2)}y_4^{(2)}y_4^{(1)}}					{y_4^{(4)}y_3^{(2)}y_3^{(1)}}+\frac{y_4^{(3)}y_4^{(2)}y_4^{(1)}}{y_3^{(3)}y_3^{(2)}y_3^{(1)}}\Big)^{-1} \\
			= y_3^{(1)} = w_3^{(1)}$.
\item $y_4^{(1)'} = \Big( \frac{z_5^{(2)}}{z_6^{(3)}}+\frac{z_6^{(2)}z_5^{(2)}}{z_4^{(4)}z_4^{(3)}}+\frac{z_5^{(2)}z_4^{(2)}}{z_4^{(4)}z_3^{(2)}}+						\frac{z_4^{(3)}z_4^{(2)}}{z_3^{(3)}z_3^{(2)}}+\frac{z_5^{(2)}z_3^{(1)}}{z_4^{(4)}z_2^{(1)}}+\frac{z_4^{(3)}z_3^{(1)}}{z_3^{(3)}					z_2^{(1)}}+\frac{z_3^{(2)}z_3^{(1)}}{z_2^{(2)}z_2^{(1)}} \Big)^{-1}\\
			= \Big( \frac{x_5^{(2)}}{x_6^{(3)}}+\frac{x_6^{(2)}x_5^{(2)}}{x_4^{(4)}x_4^{(3)}}+\frac{x_5^{(2)}x_4^{(2)}}{x_4^{(4)}x_3^{(2)}}+						\frac{x_4^{(3)}x_4^{(2)}}{x_3^{(3)}x_3^{(2)}}+\frac{x_5^{(2)}x_3^{(1)}(cx_2^{(2)}x_2^{(1)}+x_3^{(2)}x_1^{(1)})}{x_4^{(4)}x_2^{(1)}				(cx_2^{(2)}x_2^{(1)}+cx_3^{(2)}x_1^{(1)})}\\
				+\frac{x_4^{(3)}x_3^{(1)}(cx_2^{(2)}x_2^{(1)}+x_3^{(2)}x_1^{(1)})}{x_3^{(3)}x_2^{(1)}(cx_2^{(2)}x_2^{(1)}+cx_3^{(2)}x_1^{(1)})}+				\frac{x_3^{(2)}x_3^{(1)}}{cx_2^{(2)}x_2^{(1)}} \Big)^{-1} \\
			= (cx_2^{(2)}x_2^{(1)}+cx_3^{(2)}x_1^{(1)}) \Big( \big(\frac{x_5^{(2)}}{x_6^{(3)}}+\frac{x_6^{(2)}x_5^{(2)}}{x_4^{(4)}x_4^{(3)}}+						\frac{x_5^{(2)}x_4^{(2)}}{x_4^{(4)}x_3^{(2)}}+\frac{x_4^{(3)}x_4^{(2)}}{x_3^{(3)}x_3^{(2)}}+\frac{x_3^{(2)}x_3^{(1)}}{cx_2^{(2)}					x_2^{(1)}} \big) (cx_2^{(2)}x_2^{(1)}\\
				+cx_3^{(2)}x_1^{(1)}) +{\big( \frac{x_5^{(2)}x_3^{(1)}}{x_4^{(4)}x_2^{(1)}} + \frac{x_4^{(3)}x_3^{(1)}}{x_3^{(3)}x_2^{(1)}}   \big) }				(cx_2^{(2)}x_2^{(1)}+x_3^{(2)}x_1^{(1)})      \Big)^{-1} \\
			= \Big( \frac{x_5^{(2)}}{x_6^{(3)}}+\frac{x_6^{(2)}x_5^{(2)}}{x_4^{(4)}x_4^{(3)}}+\frac{x_5^{(2)}x_4^{(2)}}{x_4^{(4)}x_3^{(2)}}+						\frac{x_4^{(3)}x_4^{(2)}}{x_3^{(3)}x_3^{(2)}}+\frac{x_5^{(2)}x_3^{(1)}}{x_4^{(4)}x_2^{(1)}}+\frac{x_4^{(3)}x_3^{(1)}}{x_3^{(3)}					x_2^{(1)}}+\frac{x_3^{(2)}x_3^{(1)}}{x_2^{(2)}x_2^{(1)}} \Big)^{-1} \\
				\times \Big( \big(  \frac{x_5^{(2)}}{x_6^{(3)}}+\frac{x_6^{(2)}x_5^{(2)}}{x_4^{(4)}x_4^{(3)}}+\frac{x_5^{(2)}x_4^{(2)}}{x_4^{(4)}x_3^{(2)}}+				\frac{x_4^{(3)}x_4^{(2)}}{x_3^{(3)}x_3^{(2)}}+\frac{x_5^{(2)}x_3^{(1)}}{x_4^{(4)}x_2^{(1)}}+\frac{x_4^{(3)}x_3^{(1)}}{x_3^{(3)}					x_2^{(1)}}+\frac{x_3^{(2)}x_3^{(1)}}{x_2^{(2)}x_2^{(1)}} \big) (cx_2^{(2)}x_2^{(1)}\\
				+cx_3^{(2)}x_1^{(1)})\Big) \Big( \big(\frac{x_5^{(2)}}{x_6^{(3)}}+\frac{x_6^{(2)}x_5^{(2)}}{x_4^{(4)}x_4^{(3)}}+\frac{x_5^{(2)}						x_4^{(2)}}{x_4^{(4)}x_3^{(2)}}+\frac{x_4^{(3)}x_4^{(2)}}{x_3^{(3)}x_3^{(2)}}+\frac{x_3^{(2)}x_3^{(1)}}{cx_2^{(2)}x_2^{(1)}} \big) 					(cx_2^{(2)}x_2^{(1)} \\
				+cx_3^{(2)}x_1^{(1)}) 
				+{\big( \frac{x_5^{(2)}x_3^{(1)}}{x_4^{(4)}x_2^{(1)}} + \frac{x_4^{(3)}x_3^{(1)}}{x_3^{(3)}x_2^{(1)}}   \big) }(cx_2^{(2)}x_2^{(1)}					+x_3^{(2)}x_1^{(1)})      \Big)^{-1}\\
	= y_4^{(1)} \Big( cy_4^{(4)}(y_4^{(3)})^2(y_4^{(2)})^2 y_4^{(1)} + cy_6^{(2)} y_4^{(3)}(y_4^{(2)})^2 y_4^{(1)}y_3^{(3)} + cy_6^{(2)}y_5^{(2)}y_4^{(2)}		y_4^{(1)}y_3^{(3)}y_3^{(2)}  \\
		+ cy_6^{(2)}y_6^{(1)}y_5^{(2)}y_3^{(3)}y_3^{(2)}y_3^{(1)}\Big) \Big(cy_4^{(4)}(y_4^{(3)})^2(y_4^{(2)})^2 y_4^{(1)} +c y_6^{(2)} y_4^{(3)}				(y_4^{(2)})^2 y_4^{(1)}y_3^{(3)} \\
		+ cy_6^{(2)}y_5^{(2)}y_4^{(2)}y_4^{(1)}y_3^{(3)}y_3^{(2)} + y_6^{(2)}y_6^{(1)}y_5^{(2)}y_3^{(3)}y_3^{(2)}y_3^{(1)} \Big)^{-1}  \ \text{by Maple}\\
	= w_4^{(1)}$.
\item $y_5^{(1)'} = \Big( \frac{z_5^{(2)}}{z_4^{(4)}}+\frac{z_4^{(3)}}{z_3^{(3)}}+\frac{z_3^{(2)}}{z_2^{(2)}}+\frac{z_2^{(1)}}{z_1^{(1)}} \Big)^{-1} \\
	= \Big( \frac{x_5^{(2)}}{x_4^{(4)}}+\frac{x_4^{(3)}}{x_3^{(3)}}+\frac{x_3^{(2)}(x_2^{(2)}x_2^{(1)}+x_3^{(2)}x_1^{(1)})}{x_2^{(2)}(cx_2^{(2)}x_2^{(1)}+x_3^{(2)}x_1^{(1)})}+\frac{x_2^{(1)}(cx_2^{(2)}x_2^{(1)}+cx_3^{(2)}x_1^{(1)})}{x_1^{(1)}(cx_2^{(2)}x_2^{(1)}+x_3^{(2)}x_1^{(1)})} \Big)^{-1}\\
	= \Big( \frac{x_5^{(2)}}{x_4^{(4)}}+\frac{x_4^{(3)}}{x_3^{(3)}}+\frac{x_3^{(2)}}{x_2^{(2)}}+\frac{x_2^{(1)}}{x_1^{(1)}} \Big)^{-1} = y_5^{(1)} = w_5^{(1)}$.
\item $y_6^{(1)'} = \frac{1}{z_5^{(2)}} = \frac{1}{x_5^{(2)}} = y_6^{(1)} = w_6^{(1)}$.
\end{itemize}
\end{proof}
\end{prop}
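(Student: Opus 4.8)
The plan is to prove the operator identity by evaluating both composites on a general point $V_1(x)$ and comparing the resulting $\mathcal{V}_2$-coordinates one at a time. Keeping the notation of the statement, I set $e_2^c(V_1(x)) = V_1(z)$, $\bar{\sigma}(V_1(z)) = V_2(y')$, $\bar{\sigma}(V_1(x)) = V_2(y)$, and $\bar{e}_4^c(V_2(y)) = V_2(w)$, so that $\bar{\sigma}e_2^c = \bar{e}_4^c\bar{\sigma}$ is equivalent to the fifteen scalar identities $y_m^{(l)\prime} = w_m^{(l)}$. The decisive structural fact, read off from the explicit action formulas, is that $e_2^c$ changes only the two entries $x_2^{(2)}\mapsto c_2 x_2^{(2)}$ and $x_2^{(1)}\mapsto (c/c_2)x_2^{(1)}$ (so $z_m^{(l)} = x_m^{(l)}$ for every other pair), while $\bar{e}_4^c$ changes only the four entries $y_4^{(4)}, y_4^{(3)}, y_4^{(2)}, y_4^{(1)}$ (so $w_m^{(l)} = y_m^{(l)}$ for every other pair).

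This reduces the problem to two regimes. For a coordinate $(l,m)\notin\{(4,4),(3,4),(2,4),(1,4)\}$ I must show $y_m^{(l)\prime} = y_m^{(l)}$. First I would note that, by Lemma~\ref{yinx}, the formulas for $y_0^{(1)}, y_2^{(1)}, y_2^{(2)}, y_3^{(1)}, y_3^{(2)}, y_3^{(3)}, y_6^{(1)}, y_6^{(2)}$ involve neither $x_2^{(2)}$ nor $x_2^{(1)}$; since $z$ and $x$ agree off those two slots, these eight coordinates are unchanged with no computation. The only coordinates of this regime that genuinely depend on the altered slots are $y_5^{(1)}, y_5^{(2)}, y_5^{(3)}$. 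For each of these I would substitute $z_2^{(2)} = c_2 x_2^{(2)}$, $z_2^{(1)} = (c/c_2)x_2^{(1)}$ together with the explicit value of $c_2$, clear denominators, and observe that the numerator factor $c x_2^{(2)}x_2^{(1)} + x_3^{(2)}x_1^{(1)}$ of $c_2$ cancels against the combination produced by the substitution, returning the undeformed value; the displayed computation for $y_5^{(1)\prime}$ is the prototype.

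For the second regime, the four pairs $(l,4)$ actually moved by $\bar{e}_4^c$, the targets are $w_4^{(4)} = \bar{c}_{4_1}y_4^{(4)}$, $w_4^{(3)} = \bar{c}_{4_2}y_4^{(3)}$, $w_4^{(2)} = \bar{c}_{4_3}y_4^{(2)}$ and $w_4^{(1)} = (c/\bar{c}_{4_1}\bar{c}_{4_2}\bar{c}_{4_3})y_4^{(1)}$. Here I would feed $z$ into the Lemma~\ref{yinx} expression for $y_4^{(l)}$, insert $c_2$, and simplify, the goal being to show that the $c_2$-rescaling of the $x_2$-slots is exactly what converts the undeformed $\bar{c}_{4_i}$-sums into their $c$-deformed counterparts. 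Re-expressing the monomials $\bar{c}_4^1,\dots,\bar{c}_4^4$ in the $x$-variables via Lemma~\ref{yinx} is the device that makes this correspondence visible.

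The hard part will be the coordinate $y_4^{(1)}$: its Lemma~\ref{yinx} formula is the longest and most heavily entangled in $x_2^{(2)}, x_2^{(1)}$, and its target $w_4^{(1)}$ carries the full triple denominator $\bar{c}_{4_1}\bar{c}_{4_2}\bar{c}_{4_3}$, so all three deformation factors and the $c_2$-substitution interact at once. I expect to reduce the verification to the agreement of two large products of factors linear in $c$ after the $x_2$-rescaling, and to settle that remaining rational-function identity by a symbolic algebra computation, exactly as recorded in the displayed $y_4^{(1)\prime}$ calculation. The other three $(l,4)$ coordinates and the $l=2,3$ cases then follow by the same template with strictly lighter bookkeeping.
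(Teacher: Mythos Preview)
Your proposal is correct and follows essentially the same approach as the paper: set up the four points $z,y',y,w$, reduce the operator identity to fifteen coordinate equalities, and verify them by direct substitution of the $e_2^c$-action into the Lemma~\ref{yinx} formulas, handling the hardest case $y_4^{(1)\prime}=w_4^{(1)}$ with computer algebra. Your explicit partition into the ``trivially invariant'' coordinates (those not involving $x_2^{(j)}$), the $y_5$-coordinates requiring a cancellation, and the four $y_4$-coordinates matching the $\bar c_{4_i}$-factors is a clean organizational refinement of what the paper leaves implicit in ``the rest can be verified similarly.''
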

\section {$D_6^{(1)}$- Geometric Crystal $\mathcal{V}$}
In order to give $\mathcal{V}_1$ a $\ge=D_6^{(1)}$-geometric crystal structure, we need to define the actions of $e_0^c, \gamma_0$, and $\veps_0$ on $V_1(x)$. We use the $\ge_1$-geometric crystal structure on $\cV_2$ to define the action of $e_0^c$, $\gamma_0$, and $\veps_0$ on $V_1(x)$ as follows. 
\begin{align}
e_0^c (V_1(x)) &:= \bar{\sigma}^{-1} \circ \overline{e_{\sigma{(0)}}}^c \circ \bar{\sigma} (V_1(x))= \bar{\sigma}^{-1} \circ \bar{e}_6^c(V_2(y)), \\
\gamma_0 (V_1(x)) &:= \overline{ \gamma_{\sigma{(0)}}} \circ \bar{\sigma} (V_1(x))= \overline{ \gamma_6} (V_2(y)). \label{gamma_0}\\
\veps_0 (V_1(x)) &:= \overline{ \veps_{\sigma{(0)}}} \circ \bar{\sigma} (V_1(x))= \overline{ \veps_6}(V_2(y)), \label{veps_0}
\end{align}

Set \begin{align*} 
K= K_x &= x_6^{(1)} + \frac{x_5^{(1)} x_2^{(2)} x_2^{(1)}}{x_3^{(3)} x_3^{(2)}}+ \frac{x_5^{(1)} x_3^{(1)} x_2^{(2)}}{x_4^{(2)} x_3^{(3)}}+ \frac{x_4^{(1)} 
		x_2^{(2)}}{x_3^{(3)} }+ \frac{x_5^{(1)} x_3^{(2)} x_3^{(1)}}{x_4^{(3)} x_4^{(2)}}+ \frac{x_4^{(1)} x_3^{(2)}}{x_4^{(3)}}  \\
		&\quad+\frac{x_4^{(2)} x_4^{(1)}}{ x_6^{(2)}}+ \frac{x_2^{(2)} x_2^{(1)}}{x_6^{(3)} } + \frac{x_6^{(2)} x_2^{(2)} x_2^{(1)}}{x_4^{(4)} x_4^{(3)}}+ 
		\frac{x_4^{(2)} x_2^{(2)} x_2^{(1)}}{x_4^{(4)} x_3^{(2)}} + \frac{x_4^{(3)} x_4^{(2)} x_2^{(2)} x_2^{(1)}}{x_5^{(2)} x_3^{(3)} x_3^{(2)}} \\
		&\quad+ \frac{x_3^{(1)} x_2^{(2)} }{x_4^{(4)}} + \frac{x_4^{(3)} x_3^{(1)} x_2^{(2)}}{x_5^{(2)} x_3^{(3)}} + \frac{x_3^{(2)} x_3^{(1)}}{ x_5^{(2)}},  \\
K_{3_1} = K_{x 3_1}&= c x_6^{(1)} +  \frac{x_5^{(1)} x_2^{(2)} x_2^{(1)}}{x_3^{(3)} x_3^{(2)}}+ c \frac{x_5^{(1)} x_3^{(1)} x_2^{(2)}}{x_4^{(2)} x_3^{(3)}}+ c 
		\frac{x_4^{(1)} x_2^{(2)}}{x_3^{(3)} }+ c  \frac{x_5^{(1)} x_3^{(2)} x_3^{(1)}}{x_4^{(3)} x_4^{(2)}}+ c \frac{x_4^{(1)} x_3^{(2)}}{x_4^{(3)}}  \\
		&\quad +c \frac{x_4^{(2)} x_4^{(1)}}{ x_6^{(2)}}+ \frac{x_2^{(2)} x_2^{(1)}}{x_6^{(3)} } + \frac{x_6^{(2)} x_2^{(2)} x_2^{(1)}}{x_4^{(4)} x_4^{(3)}}+ 
		\frac{x_4^{(2)} x_2^{(2)} x_2^{(1)}}{x_4^{(4)} x_3^{(2)}} + \frac{x_4^{(3)} x_4^{(2)} x_2^{(2)} x_2^{(1)}}{x_5^{(2)} x_3^{(3)} x_3^{(2)}} \\
		&\quad +c \frac{x_3^{(1)} x_2^{(2)} }{x_4^{(4)}} + c \frac{x_4^{(3)} x_3^{(1)} x_2^{(2)}}{x_5^{(2)} x_3^{(3)}} + c \frac{x_3^{(2)} x_3^{(1)}}	
		{ x_5^{(2)}},  \\
K_{3_2} =K_{x 3_2}&= c x_6^{(1)} + \frac{x_5^{(1)} x_2^{(2)} x_2^{(1)}}{x_3^{(3)} x_3^{(2)}}+ \frac{x_5^{(1)} x_3^{(1)} x_2^{(2)}}{x_4^{(2)} x_3^{(3)}}+ 		
		\frac{x_4^{(1)} x_2^{(2)}}{x_3^{(3)} }+ c \frac{x_5^{(1)} x_3^{(2)} x_3^{(1)}}{x_4^{(3)} x_4^{(2)}}+ c \frac{x_4^{(1)} x_3^{(2)}}{x_4^{(3)}}  \\
		&\quad+c \frac{x_4^{(2)} x_4^{(1)}}{ x_6^{(2)}}+ \frac{x_2^{(2)} x_2^{(1)}}{x_6^{(3)} } + \frac{x_6^{(2)} x_2^{(2)} x_2^{(1)}}{x_4^{(4)} x_4^{(3)}}+ 
		\frac{x_4^{(2)} x_2^{(2)} x_2^{(1)}}{x_4^{(4)} x_3^{(2)}} + \frac{x_4^{(3)} x_4^{(2)} x_2^{(2)} x_2^{(1)}}{x_5^{(2)} x_3^{(3)} x_3^{(2)}} \\
		&\quad+ \frac{x_3^{(1)} x_2^{(2)} }{x_4^{(4)}} + \frac{x_4^{(3)} x_3^{(1)} x_2^{(2)}}{x_5^{(2)} x_3^{(3)}} + c \frac{x_3^{(2)} x_3^{(1)}}{ x_5^{(2)}},  \\
K_{4_1} =K_{x 4_1}&= c x_6^{(1)} + \frac{x_5^{(1)} x_2^{(2)} x_2^{(1)}}{x_3^{(3)} x_3^{(2)}}+ \frac{x_5^{(1)} x_3^{(1)} x_2^{(2)}}{x_4^{(2)} x_3^{(3)}}+ 
		c \frac{x_4^{(1)}x_2^{(2)}}{x_3^{(3)} }+ \frac{x_5^{(1)} x_3^{(2)} x_3^{(1)}}{x_4^{(3)} x_4^{(2)}}+ c \frac{x_4^{(1)} x_3^{(2)}}{x_4^{(3)}}  \\
		&\quad+c \frac{x_4^{(2)} x_4^{(1)}}{ x_6^{(2)}}+ \frac{x_2^{(2)} x_2^{(1)}}{x_6^{(3)} } + \frac{x_6^{(2)} x_2^{(2)} x_2^{(1)}}{x_4^{(4)} x_4^{(3)}}+ 
		\frac{x_4^{(2)} x_2^{(2)} x_2^{(1)}}{x_4^{(4)} x_3^{(2)}} + \frac{x_4^{(3)} x_4^{(2)} x_2^{(2)} x_2^{(1)}}{x_5^{(2)} x_3^{(3)} x_3^{(2)}} \\
		&\quad +\frac{x_3^{(1)} x_2^{(2)} }{x_4^{(4)}} + \frac{x_4^{(3)} x_3^{(1)} x_2^{(2)}}{x_5^{(2)} x_3^{(3)}} + \frac{x_3^{(2)} x_3^{(1)}}{ x_5^{(2)}},  \\
K_{4_2} =K_{x 4_2}&= cx_6^{(1)} \big(cx_6^{(1)} + c\frac{x_5^{(1)} x_2^{(2)} x_2^{(1)}}{x_3^{(3)} x_3^{(2)}}+ c\frac{x_5^{(1)} x_3^{(1)} x_2^{(2)}}{x_4^{(2)} x_3^{(3)}}+ c\frac{x_4^{(1)} 
		x_2^{(2)}}{x_3^{(3)} }+ c\frac{x_5^{(1)} x_3^{(2)} x_3^{(1)}}{x_4^{(3)} x_4^{(2)}} \\
		&\quad+ c\frac{x_4^{(1)} x_3^{(2)}}{x_4^{(3)}} +c\frac{x_4^{(2)} x_4^{(1)}}{ x_6^{(2)}}+ \frac{x_2^{(2)} x_2^{(1)}}{x_6^{(3)} } + \frac{x_6^{(2)} x_2^{(2)} x_2^{(1)}}{x_4^{(4)} x_4^{(3)}}+ 
		c\frac{x_4^{(2)} x_2^{(2)} x_2^{(1)}}{x_4^{(4)} x_3^{(2)}}\\
		&\quad + c\frac{x_4^{(3)} x_4^{(2)} x_2^{(2)} x_2^{(1)}}{x_5^{(2)} x_3^{(3)} x_3^{(2)}} + c\frac{x_3^{(1)} x_2^{(2)} }{x_4^{(4)}} + c\frac{x_4^{(3)} x_3^{(1)} x_2^{(2)}}{x_5^{(2)} x_3^{(3)}} + c\frac{x_3^{(2)} x_3^{(1)}}{ x_5^{(2)}} \big)+ c\frac{x_4^{(2)} x_4^{(1)}}{ x_6^{(2)}} \big(cx_6^{(1)}  \\
		& \quad + c\frac{x_5^{(1)} x_2^{(2)} x_2^{(1)}}{x_3^{(3)} x_3^{(2)}}+ c\frac{x_5^{(1)} x_3^{(1)} x_2^{(2)}}{x_4^{(2)} x_3^{(3)}}+ c\frac{x_4^{(1)} x_2^{(2)}}{x_3^{(3)} }+ c\frac{x_5^{(1)} x_3^{(2)} x_3^{(1)}}{x_4^{(3)} x_4^{(2)}}+ c\frac{x_4^{(1)} x_3^{(2)}}{x_4^{(3)}} \\
		&\quad+c\frac{x_4^{(2)} x_4^{(1)}}{ x_6^{(2)}}+ \frac{x_2^{(2)} x_2^{(1)}}{x_6^{(3)} } +c \frac{x_6^{(2)} x_2^{(2)} x_2^{(1)}}{x_4^{(4)} x_4^{(3)}}+ 
		c\frac{x_4^{(2)} x_2^{(2)} x_2^{(1)}}{x_4^{(4)} x_3^{(2)}}+c \frac{x_4^{(3)} x_4^{(2)} x_2^{(2)} x_2^{(1)}}{x_5^{(2)} x_3^{(3)} x_3^{(2)}}   \\
		&\quad+ c\frac{x_3^{(1)} x_2^{(2)} }{x_4^{(4)}} +c \frac{x_4^{(3)} x_3^{(1)} x_2^{(2)}}{x_5^{(2)} x_3^{(3)}} + c\frac{x_3^{(2)} x_3^{(1)}}{ x_5^{(2)}}\big) + \frac{x_2^{(2)} x_2^{(1)}}{x_6^{(3)} }  \big(cx_6^{(1)} + \frac{x_5^{(1)} x_2^{(2)} x_2^{(1)}}{x_3^{(3)} x_3^{(2)}}\\
		& \quad + \frac{x_5^{(1)} x_3^{(1)} x_2^{(2)}}{x_4^{(2)} x_3^{(3)}}+ \frac{x_4^{(1)} 
		x_2^{(2)}}{x_3^{(3)} }+ \frac{x_5^{(1)} x_3^{(2)} x_3^{(1)}}{x_4^{(3)} x_4^{(2)}}+ \frac{x_4^{(1)} x_3^{(2)}}{x_4^{(3)}} +c\frac{x_4^{(2)} x_4^{(1)}}{ x_6^{(2)}}+ \frac{x_2^{(2)} x_2^{(1)}}{x_6^{(3)} } \\
		&\quad+ \frac{x_6^{(2)} x_2^{(2)} x_2^{(1)}}{x_4^{(4)} x_4^{(3)}} + \frac{x_4^{(2)} x_2^{(2)} x_2^{(1)}}{x_4^{(4)} x_3^{(2)}} + \frac{x_4^{(3)} x_4^{(2)} x_2^{(2)} x_2^{(1)}}{x_5^{(2)} x_3^{(3)} x_3^{(2)}} + \frac{x_3^{(1)} x_2^{(2)} }{x_4^{(4)}} + \frac{x_4^{(3)} x_3^{(1)} x_2^{(2)}}{x_5^{(2)} x_3^{(3)}} \\
		&\quad+ \frac{x_3^{(2)} x_3^{(1)}}{ x_5^{(2)}} \big)+ \frac{x_6^{(2)} x_2^{(2)} x_2^{(1)}}{x_4^{(4)} x_4^{(3)}} \big(cx_6^{(1)} + \frac{x_5^{(1)} x_2^{(2)} x_2^{(1)}}{x_3^{(3)} x_3^{(2)}}+ \frac{x_5^{(1)} x_3^{(1)} x_2^{(2)}}{x_4^{(2)} x_3^{(3)}}+ \frac{x_4^{(1)} 
		x_2^{(2)}}{x_3^{(3)} }\\
		&\quad+ \frac{x_5^{(1)} x_3^{(2)} x_3^{(1)}}{x_4^{(3)} x_4^{(2)}}+ \frac{x_4^{(1)} x_3^{(2)}}{x_4^{(3)}} +\frac{x_4^{(2)} x_4^{(1)}}{ x_6^{(2)}}+ \frac{x_2^{(2)} x_2^{(1)}}{x_6^{(3)} }  + \frac{x_6^{(2)} x_2^{(2)} x_2^{(1)}}{x_4^{(4)} x_4^{(3)}}+ \frac{x_4^{(2)} x_2^{(2)} x_2^{(1)}}{x_4^{(4)} x_3^{(2)}} \\
		&\quad+ \frac{x_4^{(3)} x_4^{(2)} x_2^{(2)} x_2^{(1)}}{x_5^{(2)} x_3^{(3)} x_3^{(2)}} + \frac{x_3^{(1)} x_2^{(2)} }{x_4^{(4)}} + \frac{x_4^{(3)} x_3^{(1)} x_2^{(2)}}{x_5^{(2)} x_3^{(3)}} + \frac{x_3^{(2)} x_3^{(1)}}{ x_5^{(2)}} \big) + \big(c\frac{x_5^{(1)} x_2^{(2)} x_2^{(1)}}{x_3^{(3)} x_3^{(2)}}\\
		& \quad + c\frac{x_5^{(1)} x_3^{(1)} x_2^{(2)}}{x_4^{(2)} x_3^{(3)}}+ c\frac{x_4^{(1)} 
		x_2^{(2)}}{x_3^{(3)} }+ c\frac{x_5^{(1)} x_3^{(2)} x_3^{(1)}}{x_4^{(3)} x_4^{(2)}}+ c\frac{x_4^{(1)} x_3^{(2)}}{x_4^{(3)}} + c\frac{x_4^{(2)} x_2^{(2)} x_2^{(1)}}{x_4^{(4)} x_3^{(2)}}  \\
		&\quad+ c\frac{x_4^{(3)} x_4^{(2)} x_2^{(2)} x_2^{(1)}}{x_5^{(2)} x_3^{(3)} x_3^{(2)}} + c\frac{x_3^{(1)} x_2^{(2)} }{x_4^{(4)}} + c\frac{x_4^{(3)} x_3^{(1)} x_2^{(2)}}{x_5^{(2)} x_3^{(3)}} + c\frac{x_3^{(2)} x_3^{(1)}}{ x_5^{(2)}} \big)K,\\
K_{4_3} =K_{x 4_3}&= c x_6^{(1)} +  c \frac{x_5^{(1)} x_2^{(2)} x_2^{(1)}}{x_3^{(3)} x_3^{(2)}}+ c \frac{x_5^{(1)} x_3^{(1)} x_2^{(2)}}{x_4^{(2)} x_3^{(3)}}+ c 
		\frac{x_4^{(1)} x_2^{(2)}}{x_3^{(3)} }+ c  \frac{x_5^{(1)} x_3^{(2)} x_3^{(1)}}{x_4^{(3)} x_4^{(2)}}+ c \frac{x_4^{(1)} x_3^{(2)}}{x_4^{(3)}}  \\
		&\quad+ c \frac{x_4^{(2)} x_4^{(1)}}{ x_6^{(2)}}+ \frac{x_2^{(2)} x_2^{(1)}}{x_6^{(3)} } +  \frac{x_6^{(2)} x_2^{(2)} x_2^{(1)}}{x_4^{(4)} x_4^{(3)}}+ 
		 \frac{x_4^{(2)} x_2^{(2)} x_2^{(1)}}{x_4^{(4)} x_3^{(2)}} + c \frac{x_4^{(3)} x_4^{(2)} x_2^{(2)} x_2^{(1)}}{x_5^{(2)} x_3^{(3)} x_3^{(2)}} \\
		&\quad + \frac{x_3^{(1)} x_2^{(2)} }{x_4^{(4)}} + c \frac{x_4^{(3)} x_3^{(1)} x_2^{(2)}}{x_5^{(2)} x_3^{(3)}} + c \frac{x_3^{(2)} x_3^{(1)}}	
		{ x_5^{(2)}},\\
K_{5_1} =K_{x 5_1}&=c  x_6^{(1)} + c \frac{x_5^{(1)} x_2^{(2)} x_2^{(1)}}{x_3^{(3)} x_3^{(2)}}+ c \frac{x_5^{(1)} x_3^{(1)} x_2^{(2)}}{x_4^{(2)} x_3^{(3)}}+
		c \frac{x_4^{(1)}x_2^{(2)}}{x_3^{(3)} }+ c \frac{x_5^{(1)} x_3^{(2)} x_3^{(1)}}{x_4^{(3)} x_4^{(2)}}+ c \frac{x_4^{(1)} x_3^{(2)}}{x_4^{(3)}}  \\
		&\quad+c \frac{x_4^{(2)} x_4^{(1)}}{ x_6^{(2)}}+ \frac{x_2^{(2)} x_2^{(1)}}{x_6^{(3)} } + \frac{x_6^{(2)} x_2^{(2)} x_2^{(1)}}{x_4^{(4)} x_4^{(3)}}+ 
		\frac{x_4^{(2)} x_2^{(2)} x_2^{(1)}}{x_4^{(4)} x_3^{(2)}} + \frac{x_4^{(3)} x_4^{(2)} x_2^{(2)} x_2^{(1)}}{x_5^{(2)} x_3^{(3)} x_3^{(2)}} \\
		&\quad +\frac{x_3^{(1)} x_2^{(2)} }{x_4^{(4)}} + \frac{x_4^{(3)} x_3^{(1)} x_2^{(2)}}{x_5^{(2)} x_3^{(3)}} + \frac{x_3^{(2)} x_3^{(1)}}{ x_5^{(2)}},  \\
K_{6_1} =K_{x 6_1}&= c x_6^{(1)} + \frac{x_5^{(1)} x_2^{(2)} x_2^{(1)}}{x_3^{(3)} x_3^{(2)}}+ \frac{x_5^{(1)} x_3^{(1)} x_2^{(2)}}{x_4^{(2)} x_3^{(3)}}+ 	
		\frac{x_4^{(1)}x_2^{(2)}}{x_3^{(3)} }+ \frac{x_5^{(1)} x_3^{(2)} x_3^{(1)}}{x_4^{(3)} x_4^{(2)}}+ \frac{x_4^{(1)} x_3^{(2)}}{x_4^{(3)}}  \\
		&\quad+\frac{x_4^{(2)} x_4^{(1)}}{ x_6^{(2)}}+ \frac{x_2^{(2)} x_2^{(1)}}{x_6^{(3)} } + \frac{x_6^{(2)} x_2^{(2)} x_2^{(1)}}{x_4^{(4)} x_4^{(3)}}+ 
		\frac{x_4^{(2)} x_2^{(2)} x_2^{(1)}}{x_4^{(4)} x_3^{(2)}} + \frac{x_4^{(3)} x_4^{(2)} x_2^{(2)} x_2^{(1)}}{x_5^{(2)} x_3^{(3)} x_3^{(2)}} \\
		&\quad +\frac{x_3^{(1)} x_2^{(2)} }{x_4^{(4)}} + \frac{x_4^{(3)} x_3^{(1)} x_2^{(2)}}{x_5^{(2)} x_3^{(3)}} + \frac{x_3^{(2)} x_3^{(1)}}{ x_5^{(2)}},  \\
K_{6_2} =K_{x 6_2}&= c x_6^{(1)} +  c \frac{x_5^{(1)} x_2^{(2)} x_2^{(1)}}{x_3^{(3)} x_3^{(2)}}+ c \frac{x_5^{(1)} x_3^{(1)} x_2^{(2)}}{x_4^{(2)} x_3^{(3)}}+ c 
		\frac{x_4^{(1)} x_2^{(2)}}{x_3^{(3)} }+ c  \frac{x_5^{(1)} x_3^{(2)} x_3^{(1)}}{x_4^{(3)} x_4^{(2)}}+ c \frac{x_4^{(1)} x_3^{(2)}}{x_4^{(3)}}  \\
		&\quad +c \frac{x_4^{(2)} x_4^{(1)}}{ x_6^{(2)}}+ \frac{x_2^{(2)} x_2^{(1)}}{x_6^{(3)} } + c \frac{x_6^{(2)} x_2^{(2)} x_2^{(1)}}{x_4^{(4)} x_4^{(3)}}+ 
		c \frac{x_4^{(2)} x_2^{(2)} x_2^{(1)}}{x_4^{(4)} x_3^{(2)}} + c \frac{x_4^{(3)} x_4^{(2)} x_2^{(2)} x_2^{(1)}}{x_5^{(2)} x_3^{(3)} x_3^{(2)}} \\
		&\quad +c \frac{x_3^{(1)} x_2^{(2)} }{x_4^{(4)}} + c \frac{x_4^{(3)} x_3^{(1)} x_2^{(2)}}{x_5^{(2)} x_3^{(3)}} + c \frac{x_3^{(2)} x_3^{(1)}}	
		{ x_5^{(2)}}.
\end{align*} The following is the main result in this paper.

\begin{theorem} The algebraic variety $\cV = \cV_1 = \{V_1(x), e_k^c, \gamma_k, \veps_k \mid k \in I\}$ is a positive geometric crystal for the affine Lie algebra $\ge = D_6^{(1)}$ with the $e_0^c$, $\gamma_0$, and $\veps_0$ actions on $V_1(x)$ given by:
\begin{align*}
\gamma_0(V_1(x)) &= \frac{1}{x_2^{(2)}x_2^{(1)}},   \\
\veps_0(V_1(x)) &=  K,  \\
e_0^{c}(V_1(x)) &= V_1(x') =  V_1(x_6^{(3)'},x_4^{(4)'}, \ldots, x_6^{(1)'}) \ \text{where}
\end{align*}
\begin{align*} 
x_1^{(1)'} &= \frac{x_1^{(1)}}{c}, \qquad &x_2^{(1)'} &= \frac{x_2^{(1)}}{c}, \qquad &x_2^{(2)'} &= \frac{x_2^{(2)}}{c},\\
x_3^{(1)'} &= x_3^{(1)} \frac{K}{K_{3_1}}, \qquad &x_3^{(2)'} &= x_3^{(2)} \frac{K_{3_1}}{c K_{3_2}}, \qquad &x_3^{(3)'} &= x_3^{(3)} \frac{K_{3_2}}{cK},\\
x_4^{(1)'} &= x_4^{(1)} \frac{K}{K_{4_1}}, \qquad &x_4^{(2)'} &= x_4^{(2)} \frac{K K_{4_1}}{K_{4_2}}, \qquad &x_4^{(3)'} &= x_4^{(3)} \frac{K_{4_2}}{c K K_{4_3} }, \\
x_4^{(4)'} &= x_4^{(4)} \frac{K_{4_3}}{cK},\qquad & x_5^{(1)'} &= x_5^{(1)} \frac{K}{K_{5_1}}, \qquad &x_5^{(2)'} &= x_5^{(2)} \frac{K_{5_1}}{c K}, \\
x_6^{(1)'} &= x_6^{(1)} \frac{K}{K_{6_1}}, \qquad &x_6^{(2)'} &= x_6^{(2)} \frac{K_{6_1}}{K_{6_2}}, \qquad &x_6^{(3)'} &= x_6^{(3)} \frac{K_{6_2}}{cK}.
\end{align*}
\end{theorem}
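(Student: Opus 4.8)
The plan is to prove the theorem in two stages: first that the displayed rational expressions for $\gamma_0,\veps_0,e_0^c$ genuinely compute the maps defined by $\gamma_0=\overline{\gamma_6}\circ\bar\sigma$, $\veps_0=\overline{\veps_6}\circ\bar\sigma$ and $e_0^c=\bar\sigma^{-1}\circ\bar e_6^{\,c}\circ\bar\sigma$, and then that $\{e_k^c,\gamma_k,\veps_k\}_{k\in I}$ satisfies Definition \ref{geometric} over the full index set $I$. To obtain the closed forms I would substitute the expressions for $y_m^{(l)}$ from Lemma \ref{yinx} into the $\ge_1$-formulas for $\overline{\gamma_6},\overline{\veps_6},\bar e_6^{\,c}$ on $\cV_2$, and for $e_0^c$ then re-express the resulting element of $\cV_2$ back in $\cV_1$ through the inverse map of Proposition \ref{mapsigmabar}. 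For $\gamma_0,\veps_0$ this is a direct substitution: the cancellations forced by $y_6^{(1)}y_6^{(2)}=1/(x_5^{(2)}x_5^{(1)})$ collapse $\overline{\gamma_6}$ to $1/(x_2^{(2)}x_2^{(1)})$ and $\overline{\veps_6}$ to $K$. For $e_0^c$ the action $\bar e_6^{\,c}$ only rescales $y_6^{(2)}\mapsto \bar c_6\,y_6^{(2)}$ and $y_6^{(1)}\mapsto (c/\bar c_6)y_6^{(1)}$, and pushing this through $\bar\sigma^{-1}$ and regrouping the positive sums produces exactly the auxiliary quantities $K,K_{3_1},\dots,K_{6_2}$ and the stated ratios for the $x_m^{(l)'}$. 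Positivity of each new map is then immediate, either from the manifestly subtraction-free formulas or, conceptually, from $e_0^c=\bar\sigma^{-1}\bar e_6^{\,c}\bar\sigma$ together with the bi-positivity of $\bar\sigma^{\pm1}$ (Proposition \ref{mapsigmabar}) and of $\bar e_6^{\,c}$ (Theorem \ref{schubert}).

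Next I would reduce the axiom check to relations involving the node $0$, since every instance of Definition \ref{geometric}(1)--(4) using only indices in $\{1,\dots,6\}$ already holds by the $\ge_0$-structure of Theorem \ref{schubert}. The $\gamma$-axioms $\gamma_j(e_0^c)=c^{a_{0j}}\gamma_j$ I would dispatch first, directly, because each $\gamma_j$ is a Laurent monomial and the factors $K,K_{3_1},\dots$ telescope in products: e.g. $x_3^{(3)'}x_3^{(2)'}x_3^{(1)'}=c^{-2}x_3^{(3)}x_3^{(2)}x_3^{(1)}$ and likewise $x_4^{(4)'}\cdots x_4^{(1)'}=c^{-2}x_4^{(4)}\cdots x_4^{(1)}$, so that $\gamma_2(e_0^c)=c^{-1}\gamma_2$, $\gamma_5(e_0^c)=\gamma_5$, and all remaining cases follow with no transport needed. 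For the $e$-relations and the $\veps$-invariances the guiding principle is transport through $\bar\sigma$: because $\bar\sigma$ is built from the diagram automorphism and $a_{\sigma(i)\sigma(j)}=a_{ij}$, any identity on $\cV_2$ between $\bar e_6^{\,c}$ and $\bar e_{\sigma(j)}^{\,c}$ pulls back to the desired identity between $e_0^c=\bar\sigma^{-1}\bar e_6^{\,c}\bar\sigma$ and $e_j^c$, once one has the intertwiner $\bar\sigma\,e_j^c=\bar e_{\sigma(j)}^{\,c}\,\bar\sigma$ (and its $\gamma,\veps$ analogues). Proposition \ref{relation2} supplies this for $j=2$, and I would prove it for $j\in\{1,3,4,6\}$ (whose images $\sigma(j)\in\{5,3,2,0\}\subset I_1$) by the same comparison-after-substitution method. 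Granting these, the $3$-term relation $e_0^{c_1}e_2^{c_1c_2}e_0^{c_2}=e_2^{c_2}e_0^{c_1c_2}e_2^{c_1}$ descends from the $\bar e_6$--$\bar e_4$ relation on $\cV_2$ (where $a_{46}=-1$), the commutations of $e_0$ with $e_1,e_3,e_4,e_6$ from $a_{6,\sigma(j)}=a_{0j}=0$, and both $\veps_0(e_j^c)=\veps_0$ and $\veps_j(e_0^c)=\veps_j$ for these $j$, together with $\veps_0(e_0^c)=c^{-1}\veps_0$ and $e_0^1=\mathrm{id}$, likewise descend from $\cV_2$.

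The one family of relations that cannot be transported, and which I expect to be the \emph{main obstacle}, is the interaction of $e_0$ with the spin node $5$: namely $e_0^{c_1}e_5^{c_2}=e_5^{c_2}e_0^{c_1}$ and the companion invariances $\veps_0(e_5^c)=\veps_0$, $\veps_5(e_0^c)=\veps_5$. The reason is precisely that $\sigma(5)=1\notin I_1$, so there is no operator $\bar e_{\sigma(5)}$ on $\cV_2$ to pull back through $\bar\sigma$; and since $e_0^c$ alters all fifteen coordinates while $\veps_0=K$, $\veps_5$ and $e_5^c$ all genuinely depend on $x_5^{(1)},x_5^{(2)}$, no variable-disjointness shortcut applies. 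I would therefore verify these identities by direct (Maple-assisted) computation from the explicit formulas, exactly in the spirit of the calculation carried out in Proposition \ref{relation2}; the structural fact to exploit is that the product $x_5^{(2)}x_5^{(1)}$ scales by $c^{-1}$ under $e_0^c$ and by $c$ under $e_5^c$, so these two scalings commute and one is left to check that the residual dependence through the $K$'s cancels after clearing denominators. Completing this $(0,5)$ case, together with the two attached $\veps$-invariances, finishes the proof that $\cV=\cV_1$ is a positive geometric crystal for $D_6^{(1)}$.
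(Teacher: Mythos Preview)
Your proposal is correct and close in spirit to the paper's proof, but it organizes the verification of the commutation relations differently, and in doing so it is in one respect more careful than the paper itself. The paper lists five relations to check and handles relation (4), $e_0^{c_1}e_k^{c_2}=e_k^{c_2}e_0^{c_1}$ for $k\in\{1,3,4,5,6\}$, by a direct coordinate computation: it writes out the case $k=1$ (which is trivial because $x_1^{(1)}$ does not occur in $K$ or any $K_{*}$) and asserts that $k=3,4,5,6$ are ``similar.'' You instead propose to establish additional intertwiners $\bar\sigma\,e_j^c=\bar e_{\sigma(j)}^{\,c}\,\bar\sigma$ for $j\in\{1,3,4,6\}$ and then pull back the $\ge_1$-crystal commutations on $\cV_2$; this is more work up front but makes those four cases of (4) conceptual rather than computational. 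For relation (5) both routes coincide, using Proposition \ref{relation2}.

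Your singling out of $k=5$ as the genuinely hard case is a sharper observation than the paper makes explicit: since $\sigma(5)=1\notin I_1$ there is no $\bar e_1$ on $\cV_2$ to transport, so this commutation (and the attached $\veps$-invariances) must be done by hand from the explicit formulas, and unlike $k=1$ the variables $x_5^{(1)},x_5^{(2)}$ do appear throughout $K,K_{3_1},\dots,K_{6_2}$, so the check is not the same triviality. The paper's ``similarly'' covers this but does not flag the distinction. You also remember to list the $\veps$-invariances $\veps_0(e_j^c)=\veps_0$ and $\veps_j(e_0^c)=\veps_j$ for $a_{0j}=0$, which Definition \ref{geometric}(4) requires but the paper's enumerated list (1)--(5) omits; and you note that the displayed closed forms for $\gamma_0,\veps_0,e_0^c$ must be matched against their transport definitions, which the paper takes for granted. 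One small omission on your side: the paper also checks $\gamma_0(e_k^c)=c^{a_{k0}}\gamma_0$ for $k\ne 0$, which you do not mention explicitly, but this is immediate from $\gamma_0=1/(x_2^{(2)}x_2^{(1)})$.
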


\begin{proof} Since $\cV= \cV_1$ is a positive geometric crystal for $\ge_0$, to show that it is a positive geometric crystal for 
$\ge = D_6^{(1)}$, it suffices to show that the following relations involving the $0$-action hold (see Definition \ref{geometric}):
\begin{enumerate}
	\item $\gamma_0 (e_k^c (V_1(x))) = c^{a_{k0}} \gamma_0 (V_1(x))$ for all $k \in I$ 
	\item $\gamma_k (e_0^c (V_1(x))) = c^{a_{0k}} \gamma_k (V_1(x))$ for all $k \in I$ 
	\item $\veps_0 (e_0^c (V_1(x))) = c^{-1} \veps_0 (V_1(x))$
	\item $e_0^{c_1} e_k^{c_2} = e_k^{c_2} e_0^{c_1}$ for all $k \in \{1,3,4,5,6\}$ 
	\item $e_0^{c_1} e_2^{c_1 c_2} e_0^{c_2}= e_2^{c_2} e_0^{c_1 c_2} e_2^{c_1}$ 
\end{enumerate}
Note that \ $x_1^{(1)'}= c^{-1}x_1^{(1)}, \ x_2^{(2)'}x_2^{(1)'}= c^{-2}x_2^{(2)}x_2^{(1)}, \  x_3^{(3)'}x_3^{(2)'}x_3^{(1)'}= c^{-2}x_3^{(3)}x_3^{(2)}x_3^{(1)}, \\  x_4^{(4)'}x_4^{(3)'}x_4^{(2)'}x_4^{(1)'}
= c^{-2}x_4^{(4)}x_4^{(3)}x_4^{(2)}x_4^{(1)}, \ x_5^{(2)'}x_5^{(1)'}= c^{-1}x_5^{(2)}x_5^{(1)}, \ x_6^{(3)'}x_6^{(2)'}x_6^{(1)'}\\= c^{-1}x_6^{(3)}x_6^{(2)}x_6^{(1)}$. Relations (1) and (2) follows easily from the defined actions. For example,
\begin{align*}
\gamma_0 (e_0^c (V_1(x))) &= \frac{1}{x_2^{(2)'}x_2^{(1)'}} =  \frac{1}{c^{-2}x_2^{(2)}x_2^{(1)}} = c^2 \gamma_0 (V_1(x)) = c^{a_{0,0}} \gamma_0 (V_1(x)),\\
\gamma_0 (e_2^c (V_1(x))) &= \frac{1}{c_2x_2^{(2)}cc_2^{-1}x_2^{(1)}} =  c^{-1} \gamma_0 (V_1(x))) = c^{a_{2,0}} \gamma_0 (V_1(x)),\\
\gamma_1 (e_0^c (V_1(x))) &= \frac{(x_1^{(1)'})^2}{x_2^{(2)'}x_2^{(1)'}} = \frac{(c^{-1})^2(x_1^{(1)})^2}{c^{-2}x_2^{(2)}x_2^{(1)}} =  c^0 \gamma_1 (V_1(x))= c^{a_{0,1}} \gamma_1 (V_1(x)).\\
\end{align*}
Now we consider relation (3). We have
\begin{align*}
	&\veps_0 (e_0^c (V_1(x)))  = x_6^{(1)'} + \frac{x_5^{(1)'} x_2^{(2)'} x_2^{(1)'}}{x_3^{(3)'} x_3^{(2)'}}+ \frac{x_5^{(1)'} x_3^{(1)'} x_2^{(2)'}}{x_4^{(2)'} x_3^{(3)'}}+ \frac{x_4^{(1)'} x_2^{(2)'}}{x_3^{(3)'} }+ \frac{x_5^{(1)'} x_3^{(2)'} x_3^{(1)'}}{x_4^{(3)'} x_4^{(2)'}}\\
	& \qquad +\frac{x_4^{(1)'} x_3^{(2)'}}{x_4^{(3)'}} + \frac{x_4^{(2)'} x_4^{(1)'}}{ x_6^{(2)'}} + \frac{x_2^{(2)'} x_2^{(1)'}}{x_6^{(3)'} } + \frac{x_6^{(2)'} x_2^{(2)'} x_2^{(1)'}}{x_4^{(4)'} x_4^{(3)'}}+ \frac{x_4^{(2)'} x_2^{(2)'} x_2^{(1)'}}{x_4^{(4)'} x_3^{(2)'}}  \\
	& \qquad +\frac{x_4^{(3)'} x_4^{(2)'} x_2^{(2)'} x_2^{(1)'}}{x_5^{(2)'} x_3^{(3)'} x_3^{(2)'}} +\frac{x_3^{(1)'} x_2^{(2)'} }{x_4^{(4)'}}+ \frac{x_4^{(3)'} x_3^{(1)'} x_2^{(2)'}}{x_5^{(2)'} x_3^{(3)'}}   + \frac{x_3^{(2)'} x_3^{(1)'}}{ x_5^{(2)'}}\\	
   	& \quad = x_6^{(1)} \cdot \frac{K}{K_{6_1}} + \frac{x_5^{(1)} x_2^{(2)} x_2^{(1)}}{x_3^{(3)} x_3^{(2)}} \cdot \frac{K^2}{K_{5_1} K_{3_1}}  
	+ \frac{x_5^{(1)} x_3^{(1)} x_2^{(2)}}{x_4^{(2)} x_3^{(3)}} \cdot  \frac{K^2 K_{4_2}}{ K_{5_1} K_{4_1} K_{3_2} K_{3_1}} \\
&\qquad + \frac{x_4^{(1)} x_2^{(2)}}{x_3^{(3)} } \cdot \frac{K^2}{ K_{4_1} K_{3_2}}+ \frac{x_5^{(1)} x_3^{(2)} x_3^{(1)}}{x_4^{(3)} x_4^{(2)}} \cdot  \frac{K^2 K_{4_3}}{ K_{5_1} K_{4_1} K_{3_2}} 
	+\frac{x_4^{(1)} x_3^{(2)}}{x_4^{(3)}}  \cdot  \frac{K^2 K_{4_3} K_{3_1}}{K_{4_2} K_{4_1} K_{3_2}}\\
&\qquad	+ \frac{x_4^{(2)} x_4^{(1)}}{ x_6^{(2)}}  \cdot  \frac{K^2 K_{6_2}}{K_{6_1} K_{4_2}}
	 + \frac{x_2^{(2)} x_2^{(1)}}{x_6^{(3)} }  \cdot \frac{K}{c K_{6_2}}  + \frac{x_6^{(2)} x_2^{(2)} x_2^{(1)}}{x_4^{(4)} x_4^{(3)}} \cdot \frac{K^2 K_{6_1}}{ K_{6_2} K_{4_2} } \\
&\qquad	 + \frac{x_4^{(2)} x_2^{(2)} x_2^{(1)}}{x_4^{(4)} x_3^{(2)}}  \cdot \frac{K^2 K_{4_1} K_{3_2}}{K_{4_3} K_{4_2} K_{3_1}}
	+\frac{x_4^{(3)} x_4^{(2)} x_2^{(2)} x_2^{(1)}}{x_5^{(2)} x_3^{(3)} x_3^{(2)}} \cdot \frac{K^2 K_{4_1}}{ K_{5_1} K_{4_3} K_{3_1}} \\
&\qquad+\frac{x_3^{(1)} x_2^{(2)} }{x_4^{(4)}} \cdot \frac{K^2}{ K_{4_3} K_{3_1}} 
	+ \frac{x_4^{(3)} x_3^{(1)} x_2^{(2)}}{x_5^{(2)} x_3^{(3)}}  \cdot  \frac{K^2 K_{4_2}}{  K_{5_1} K_{4_3} K_{3_2}  K_{3_1} }+ \frac{x_3^{(2)} x_3^{(1)}}{ x_5^{(2)}} \cdot \frac{K^2}{ K_{5_1} K_{3_2}} \\	
	& \quad = c^{-1} K \Big(\big(K_{6_2} K_{6_1} K_{5_2} K_{5_1} K_{4_3} K_{4_2} K_{4_1} K_{3_2} K_{3_1}  \big) \\
&\qquad	\times \big(K_{6_2} K_{6_1} K_{5_2} K_{5_1} K_{4_3} K_{4_2} K_{4_1} K_{3_2} K_{3_1}  \big)^{-1} \Big) \\
	& \quad =c^{-1} K = c^{-1} \veps_0 (V_1(x)).
\end{align*} 

%relation(4) 
Next we show relation (4) hold for $k=1$. It can be shown similarly that it holds for $k = 3,4,5,6$. We need to show that
$$e_0^{c_1} e_1^{c_2} (V_1(x)) = e_1^{c_2} e_0^{c_1} (V_1(x)).$$
Set $e_1^{c_2} (V_1(x)) = V_1(z)$, $e_0^{c_1} (V_1(z)) = V_1(z')$, $e_0^{c_1} (V_1(x)) =V_1(x')$ and  $e_1^{c_2}  (V_1(x')) =V_1(u)$. We have to show that 
$$z_m^{(l)'} = u_m^{(l)}$$
for $(l,m) \in\{(1,1), (1,2), (1,3), (1,4), (1,5), (1,6), (2,2), (2,3), (2,4), (2,5), (2,6), \\(3,3),(3,4), (3,5), (4,4)\}$. Since in this case $z_1^{(1)} = c_2x_1^{(1)}$ and $z_m^{(l)} = x_m^{(l)}$ for $(l,m) \not= (1,1)$ we have 
$K_z=K_x, K_{z 3_1} = K_{x 3_1}, K_{z 3_2} = K_{x 3_2}, K_{z 4_1} 
= K_{x 4_1}, K_{z 4_2} = K_{x 4_2}, K_{z 4_3} = K_{x 4_3}, K_{z 5_1} = K_{x 5_1}, K_{z 6_1} = K_{x 6_1}$ and $K_{z 6_2} = K_{x 6_2}$. Hence we have the following.
\begin{itemize}
\item $z_1^{(1)'} = \frac{z_1^{(1)}}{c_1} = \frac{c_2 x_1^{(1)}}{c_1} = c_2 x_1^{(1)'} = u_1^{(1)}$.
\item $z_2^{(1)'} = \frac{z_2^{(1)}}{c_1} = \frac{x_2^{(1)}}{c_1} = x_2^{(1)'} =  u_2^{(1)}$.
\item $z_2^{(2)'} = \frac{z_2^{(2)}}{c_1} = \frac{x_2^{(2)}}{c_1} = x_2^{(2)'} =  u_2^{(2)}$.
\item $z_3^{(1)'} = z_3^{(1)} \frac{K_z}{K_{z 3_1}} =  x_3^{(1)} \frac{K_x}{K_{x 3_1}} = x_3^{(1)'} = u_3^{(1)}$.
\item $z_3^{(2)'} = z_3^{(2)} \frac{K_{z 3_1}}{c_1 K_{z 3_2}} =  x_3^{(2)}  \frac{K_{x 3_1}}{c_1 K_{x 3_2}} = x_3^{(2)'} = u_3^{(2)}$.
\item $z_3^{(3)'} = z_3^{(3)} \frac{K_{z 3_2}}{c_1 K_z} =  x_3^{(3)} \frac{K_{x 3_2}}{c_1 K_x} = x_3^{(3)'} = u_3^{(3)}$.
\item $z_4^{(1)'} = z_4^{(1)} \frac{K_z}{K_{z 4_1}} =   x_4^{(1)} \frac{K_x}{K_{x 4_1}} = x_4^{(1)'} = u_4^{(1)}$.
\item $z_4^{(2)'} = z_4^{(2)} \frac{K_z K_{z 4_1}}{K_{z 4_2}} =   x_4^{(2)} \frac{K_x K_{x 4_1}}{K_{x 4_2}} = x_4^{(2)'} = u_4^{(2)}$.
\item $z_4^{(3)'} = z_4^{(3)} \frac{K_{z 4_2}}{c_1 K_z K_{z 4_3} } =  \frac{K_{x 4_2}}{c_1 K_x K_{x 4_3} }= x_4^{(3)'} = u_4^{(3)}$.
\item $z_4^{(4)'} = z_4^{(4)} \frac{K_{z 4_3}}{c_1 K_z} =   x_4^{(4)} \frac{K_{x 4_3}}{c_1 K_x} = x_4^{(4)'} = u_4^{(4)}$.
\item $z_5^{(1)'} = z_5^{(1)} \frac{K_z}{K_{z 5_1}} =  x_5^{(1)} \frac{K_x}{K_{x 5_1}} = x_5^{(1)'} = u_5^{(1)}$.
\item $z_5^{(2)'} = z_5^{(2)} \frac{K_{z 5_1}}{c_1 K_z} =  x_5^{(2)} \frac{K_{x 5_1}}{c_1 K_x} = x_5^{(2)'} = u_5^{(2)}$.
\item $z_6^{(1)'} = z_6^{(1)} \frac{K_z}{K_{z 6_1}} =  x_6^{(1)} \frac{K_x}{K_{x 6_1}} = x_6^{(1)'} = u_6^{(1)}$.
\item $z_6^{(2)'} = z_6^{(2)}  \frac{K_{z 6_1}}{K_{z 6_2}} =  x_6^{(2)} \frac{K_{x 6_1}}{K_{x 6_2}} = x_6^{(2)'} = u_6^{(2)}$.
\item $z_6^{(3)'} = z_6^{(3)} \frac{K_{z 6_2}}{c_1 K_z} =  x_6^{(3)} \frac{K_{x 6_2}}{c_1 K_x} = x_6^{(3)'} = u_6^{(3)}$.
\end{itemize}
since $u_1^{(1)} = c_2x_1^{(1)'}$ and $u_m^{(l)} = c_2x_m^{(l)'}$ for $(l,m) \not= (1,1)$.
Finally to show relation (5) we observe that $\bar{e_{6}}^{c_1}  \bar{e_4}^{c_1c_2} \bar{e_{6}}^{c_2}=\bar{e_4}^{c_2} \bar{e_6}^{c_1c_2} \bar{e_4}^{c_1}$ since $\mathcal{V}_2$ is a $\mathfrak{g}_1$-geometric crystal. Hence by Proposition \ref{relation2}, we have 
\begin{align*}
e_0^{c_1} e_2^{c_1c_2}e_0^{c_2} &= \bar{\sigma}^{-1} \bar{e_{6}}^{c_1}  \bar{\sigma} e_2^{c_1c_2} \bar{\sigma}^{-1} \bar{e_{6}}^{c_2}  \bar{\sigma} \\
&= \bar{\sigma}^{-1} \bar{e_{6}}^{c_1}  \bar{e_4}^{c_1c_2} \bar{e_{6}}^{c_2}  \bar{\sigma} \,
= \bar{\sigma}^{-1} \bar{e_4}^{c_2} \bar{e_6}^{c_1c_2} \bar{e_4}^{c_1}  \bar{\sigma} \\
&= e_2^{c_2} \bar{\sigma}^{-1}  \bar{e_6}^{c_1c_2} \bar{\sigma} e_2^{c_1}  \,
=e_2^{c_2} e_0^{c_1c_2}e_2^{c_1},
\end{align*}
which completes the proof.
\end{proof}

\bibliographystyle{amsalpha}

\end{document}